\theoremstyle{plain}
\newtheorem{theorem}{Theorem}[section]
\newtheorem{proposition}[theorem]{Proposition}
\newtheorem{corollary}[theorem]{Corollary}
\newtheorem{definition}[theorem]{Definition}
\newtheorem{remark}[theorem]{Remark}
\newtheorem{question}[theorem]{Question}
\newcommand{\N}{\mathbb{N}}  
\newcommand{\Z}{\mathbb{Z}}  
\newcommand{\R}{\mathbb{R}}  
\newcommand{\eps}{\varepsilon} 
\newcommand{\diam}{\operatorname{diam}}
\newcommand{\stn}{\operatorname{st_n}}
\newcommand{\stomega}{\operatorname{st_\omega}}
\begin{document}

\title[Shadowing in the hyperspace of continua]{Shadowing in the hyperspace of continua}

\author{Bernardo Carvalho and Udayan Darji}

\date{\today}
\thanks{2020 \emph{Mathematics Subject Classification}: Primary 37D10, 37B40, 37B45, Secondary 37B99.}
\keywords{Shadowing, hyperspace, continua, induced map.}

\begin{abstract}
We discuss whether classical examples of dynamical systems satisfying the shadowing property also satisfy the shadowing property for the induced map on the hyperspace of continua, obtaining both positive and negative results. We prove that transitive Anosov diffeomorphisms, or more generally continuum-wise hyperbolic homeomorphisms, do not satisfy the shadowing property for the induced map on the hyperspace of continua. We prove that dendrite monotone maps satisfy the shadowing property if, and only if, their induced map on the hyperspace of continua also satisfies it. We give some algorithms that show that there are abundant dynamical systems satisfying the shadowing property with dendrites (compact metric trees) as the underlying space. As a consequence, we show that the universal dendrite of order $n$ admits a homeomorphism with the shadowing property.
\end{abstract}

\maketitle

\section{Introduction}

The dynamical systems theory studies the behavior of orbits generated by some evolution law. We can think of a discrete dynamical system as a map $f\colon X\to X$ where $(X,d)$ is a compact metric space. In this case, the orbit of a point $x\in X$ is the sequence $(f^n(x))_{n\in\N}$, where $f^n$ denotes the composition of $f$ with itself $n$ times. A central question in the theory is: what happens with orbits $(f^n(x))_{n\in\N}$ when $n\to\infty$? In many cases, it is necessary to approximate or truncate the orbit, yielding what are called \emph{pseudo-orbits}. They do not describe the real evolution of the system, therefore it would be useful to obtain a real orbit (hence the real evolution of the system) following them through time. In this way, the pseudo-orbit errors can be disregarded. This property in a dynamical system is called the \emph{shadowing property} (see definition \ref{shadowing}). Its importance comes from distinct research directions, such as the stability theory \cite{Walterspotp}, recurrence theory \cite{Conley}, chaotic and hyperbolic dynamics \cite{Anosov}, \cite{Bowen}, \cite{SMALE67}, and it is a significant part of the qualitative study of dynamical systems that contains several important and deep results (see the monographs \cite{Palmer} and \cite{PilyuginShadowingBook1999}).

Many distinct notions of shadowing properties were introduced and discussed extensively in the literature. They either assume distinct notions of pseudo-orbits or distinct notions of shadowing orbits (see for example \cite{ACCV}, \cite{C}, \cite{CK}, \cite{ENP}, \cite{KO}, \cite{PilyuginLimsha}, \cite{PilyuginVarsha}, \cite{PRS}, \cite{PT}, \cite{Sakai}, \cite{Vleck}). In this article, we discuss pseudo-orbits where elements are not necessarily single points but are sets of the space (not necessarily small). We restrict ourselves to the case the sets are continua (compact and connected subsets) obtaining for every pseudo-orbit of continua, a continuum shadowing it through time. This can be easily explained using the induced map on the hyperspace of continua as follows. 

Let $f\colon X\to X$ be a continuous map of a compact metric space $(X,d)$, $\mathcal{C}(X)$ be the set of all subcontinua of $X$, and $C(f)\colon\mathcal{C}(X)\to\mathcal{C}(X)$ be the map defined by $C(f)(A)=f(A)$. We call $C(f)$ the induced map of $f$ on the hyperspace of continua $\mathcal{C}(X)$. Endow $\mathcal{C}(X)$ with the Hausdorff distance $d_H(F,G)=\inf\{\eps>0; F\subset B(G,\eps) \,\,\text{and}\,\, G\subset B(F,\eps)\}$, where for a compact set $A$, $B(A,\eps)=\{x\in M; d(x,A)<\eps\}$. Thus, we obtain a compact metric space where $C(f)$ is a continuous self-map. In the set $\mathcal{C}(X)$, points are continua of $X$ so the shadowing property of the system $(\mathcal{C}(X),C(f))$ describes precisely the property of shadowing pseudo-orbits of continua we described above.

The study of dynamics in hyperspaces comes from the idea of understanding collective dynamics.  Hyperspace dynamics has applications in various disciplines of sciences, economics, and engineering (see, for example, \cite{DD}, \cite{MLR}, \cite{Rohde}, \cite{Rubinov}, \cite{Handwheel}, \cite{Electrons}, \cite{Zaslavski}). For example, if one considers population dynamics, then the hyperspace dynamics determines collective behavior of sub-populations as an aggregate as opposed to behavior of an individual agent. More precisely, given a dynamical system $(X,f)$ its hyperspace dynamics is the dynamical system $(2^X, 2^f)$ where $2^X$ is the hyperspace of compact subsets of $X$ and $2^f$ is the induced map defined by $2^f(A)=f(A)$. There is extensive literature that investigates similarities and differences between the dynamics of a map and its hyperspace dynamics. The properties studied include topological entropy, sensitivity to initial conditions, recurrence properties, and specification-like properties (see \cite{KwietniakOprocha}, \cite{CanovasLopez}, \cite{Banks}, \cite{MaHouLiao}, \cite{ZhangZenLiu}, \cite{CamargoJavierGarcia}, \cite{AcostaGerardoIllanes}, \cite{Fedeli}, \cite{FloresCano}, \cite{RomanFlores}, \cite{BauerSigmund}, and \cite{ACCC}).  

Regarding the shadowing property, it is proved in \cite{FernandezGood} that $f$ has the shadowing property if, and only if, $2^f$ has the shadowing property. It is easy to prove that if $C(f)$ has the shadowing property, then $f$ has the shadowing property \cite{FernandezGood}. As far as the converse is concerned, in \cite{ArbietoBohorquez} it is shown that if $f\colon\mathbb{S}^1\to\mathbb{S}^1$ is a Morse-Smale diffeomorphism, then $C(f)$ does not satisfy the shadowing property. Morse-Smale diffeomorphisms of $\mathbb{S}^1$ are the simplest possible dynamical systems where there are (up to a finite iterate) only a finite number of fixed points that are either sinks or sources and the orbit of every other point converges to these fixed points in the future and in the past. This was generalized in \cite[Proposition 9]{ArbietoBohorquez} to the North-pole/South-pole diffeomorphism of $\mathbb{S}^2$ but it is still open in the case of general Morse-Smale diffeomorphisms (even on the Sphere). 

Thus, it is interesting to understand what are the maps that induce the shadowing property in $\mathcal{C}(X)$ and the ones that do not. In this work, we discuss this problem for a few classes of systems satisfying the shadowing property, obtaining both positive and negative results. 

Compact metric trees, also known as dendrites, arise in a variety of situations. Some of the earliest results concerning dendrites were obtained by Wa\.zewski who constructed and investigated properties of the universal dendrites. Dendrites appear as Julia sets of quadratic polynomials in the plane \cite{CarlesonGamelinBook}. Recently, inspired by work of  Crovisier and Pujals \cite{CrovisierPujals2018},  Boronski and Stimac \cite{BoronskiStimac2023} showed that strange attractors of certain Henon-like and Lozi-like maps are conjugate to inverse limits of maps on dendrites.   Dendrites have also enjoyed attention from descriptive set theory, in particular from Fra\"iss\'e theory. For example, Codenotti and Kwiatkowska \cite{Codenotti2024} constructed generalized  Wa\.zewski dendrites as projective Fra\"iss\'e limits. Duchesne and Monod \cite{DuchesneEtalTAMS2019} investigated conjugacy class of the group of homeomorphisms of the universal dendrite whose branch points have cardinality $n$, $n >2$. In particular, they proved that the automorphism group of such universal dendrite has a comeager conjugacy class if and only if $n = \omega$.  We prove that if $f\colon D\to D$ is a monotone map on a dendrite $D$, then $C(f)$ has the shadowing property if, and only if, $f$ has the shadowing property.

A classical result in hyperbolic dynamics is that hyperbolic sets satisfy the shadowing property. This was proved independently by Anosov \cite{Anosov} and Bowen \cite{Bowen}. In particular, Anosov diffeomorphisms, where the whole ambient manifold is a hyperbolic set, satisfy the shadowing property. The literature about Anosov diffeomorphisms is extensive and more information can be found in \cite{SMALE67}, \cite{Franksthesis}, \cite{Franks}. The shadowing property is not a property restricted to hyperbolic systems and classifying  systems satisfying the shadowing property is one of the challenging problems in dynamical systems. Some generalizations of hyperbolicity also imply the shadowing property. This is the case, for example, of the continuum-wise hyperbolicity considered in \cite{ARTIGUE2024512} that generalizes hyperbolicity with respect to the continuum theory. We prove that if $f\colon M\to M$ is a transitive Anosov diffeomorphism of a closed manifold, or more generally a continuum-wise hyperbolic homeomorphism of a Peano continuum, then the induced map $C(f)$ does not satisfy the shadowing property. 

\section{Continuum-wise expansive/hyperbolic homeomorphisms}

The main result of this section is Theorem \ref{cw-hyp} where we prove that $C(f)$ does not have the shadowing property when $f$ is a transitive cw-hyperbolic homeomorphism (this includes the case of Anosov diffeomorphisms). Before stating the results of this section, we state all necessary definitions.

\begin{definition}[Shadowing]\label{shadowing}
We say that a homeomorphism $f\colon X\rightarrow X$ of a compact metric space $(X,d)$ satisfies the \emph{shadowing property} if given $\varepsilon>0$ there is $\delta>0$ such that for each sequence $(x_n)_{n\in\mathbb{Z}}\subset X$ satisfying
$$d(f(x_n),x_{n+1})<\delta \,\,\,\,\,\, \text{for every} \,\,\,\,\,\, n\in\mathbb{Z}$$ there is $y\in X$ such that
$$d(f^n(y),x_n)<\varepsilon \,\,\,\,\,\, \text{for every} \,\,\,\,\,\, n\in\mathbb{Z}.$$
In this case, we say that $(x_k)_{k\in\mathbb{Z}}$ is a $\delta-$pseudo orbit of $f$ and that $(x_n)_{n\in\mathbb{Z}}$ is
$\varepsilon-$shadowed by $y$.
\end{definition}

\begin{definition}[Local stable/unstable sets/continua]
For each $x\in X$ and $c>0$, let 
$$W^s_{c}(x):=\{y\in X; \,\, d(f^k(y),f^k(x))\leq c \,\,\,\, \textrm{for every} \,\,\,\, k\geq 0\}$$
be the \emph{c-stable set} of $x$ and
$$W^u_{c}(x):=\{y\in X; \,\, d(f^k(y),f^k(x))\leq c \,\,\,\, \textrm{for every} \,\,\,\, k\leq 0\}$$
be the \emph{c-unstable set} of $x$. Denote by $C^s_c(x)$ the $c$-stable continuum of $x$, that is the connected component of $x$ on $W^s_{c}(x)$, and by $C^u_c(x)$ the $c$-unstable continuum of $x$, that is the connected component of $x$ on $W^u_{c}(x)$.
\end{definition}

\begin{definition}[Sets of stable/unstable continua]
Let $\diam(A)$ denotes the diameter of the set $A$ defined by $\diam(A)=\sup\{d(x,y); x,y\in A\}$,
\[
\mathcal{C}^s=\{C\in\mathcal{C}(X)\;;\;\diam(f^n(C))\to 0 \, \text{ when }\, n\to\infty\}, \,\,\,\,\,\, \text{and}
\]
\[
\mathcal{C}^u=\{C\in\mathcal{C}(X)\;;\;\diam(f^{-n}(C))\to 0\, \text{ when }\, n\to\infty\}.
\]
Continua in $\mathcal{C}^s$ are called stable and continua in $\mathcal{C}^u$ are called unstable. Let
\[
\mathcal{C}^s_\eps=\{C\in\mathcal{C}(X)\;;\;\diam(f^n(C))\leq\eps\, \text{ for every }\, n\geq 0\} \,\,\,\,\,\, \text{and}
\]
\[
\mathcal{C}^u_\eps=\{C\in\mathcal{C}(X)\;;\;\diam(f^{-n}(C))\leq\eps\, \text{ for every }\, n\geq 0\}.
\]
These sets contain exactly the $\eps$-stable and $\eps$-unstable continua of $f$, respectively.
\end{definition}

\begin{definition}[Continuum-wise expansiveness]
For each $x\in X$ and $c>0$ let $$\Gamma_{c}(x)=W^u_{c}(x)\cap W^s_{c}(x)$$ be the dynamical ball of $x$ with radius $c$. We say that $f$ is \emph{expansive} if there exists $c>0$ such that $$\Gamma_c(x)=\{x\} \,\,\,\,\,\, \text{for every} \,\,\,\,\,\, x\in X.$$ We say that $f$ is \emph{continuum-wise expansive} if there exists $c>0$ such that $\Gamma_{c}(x)$ is totally disconnected for every $x\in X$. The number $c$ is called a \emph{cw-expansive constant} of $f$.
\end{definition}
Cw-expansiveness was first considered by Kato in \cite{Kato1} and \cite{Kato2}. It was proved there that if $c>0$ is a cw-expansive constant of $f$ and $\eps<\frac{c}{2}$, then 
$$\mathcal{C}^s_{\eps}\subset\mathcal{C}^s \,\,\,\,\,\, \text{and} \,\,\,\,\,\, \mathcal{C}^u_{\eps}\subset\mathcal{C}^u.$$ The following result contains a mechanism that rules out the shadowing property for $C(f)$ when $f$ is a cw-expansive homeomorphism.

\begin{theorem}\label{cw-cf}
Let $f\colon X \rightarrow X$ be a cw-expansive homeomorphism of a compact metric space $(X,d)$. If for each $n\in\N$, there is a pair of non-trivial stable/unstable continua $(S_n,U_n)$ satisfying 
\begin{enumerate}
\item $d_H(S_n, U_n)\to0$ when $n\to\infty$ and 
\item there exists $r>0$ such that 
$$\diam(S_n)>r \,\,\,\,\,\, \text{and} \,\,\,\,\,\, \diam(U_n)>r \,\,\,\,\,\, \text{for every} \,\,\,\,\,\, n\in\N,$$
\end{enumerate}
then $C(f)$ does not have the shadowing property. 
\end{theorem}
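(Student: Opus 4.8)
The plan is to argue by contradiction: assuming $C(f)$ has the shadowing property, I would use the pairs $(S_n,U_n)$ to manufacture a single pseudo-orbit of continua whose shadowing continuum is forced to lie in $\mathcal{C}^s\cap\mathcal{C}^u$ while being nontrivial, and then show that this is incompatible with cw-expansiveness. First fix a cw-expansive constant $c$ and choose $\eps>0$ with $\eps<\min\{c/6,\,r/2\}$, and let $\delta>0$ be the shadowing constant of $C(f)$ associated with $\eps$. Since $d_H(S_n,U_n)\to0$, pick $n$ with $d_H(S_n,U_n)<\delta$ and define $(z_k)_{k\in\Z}\subset\mathcal{C}(X)$ by $z_k=f^k(U_n)$ for $k\le -1$ and $z_k=f^k(S_n)$ for $k\ge 0$. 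For every $k\neq -1$ one has $C(f)(z_k)=z_{k+1}$ exactly; at the single seam $C(f)(z_{-1})=f(f^{-1}(U_n))=U_n$ while $z_0=S_n$, so the only nonzero error is $d_H(U_n,S_n)<\delta$. Hence $(z_k)$ is a $\delta$-pseudo-orbit of $C(f)$, and there is $Y\in\mathcal{C}(X)$ with $d_H(f^k(Y),z_k)<\eps$ for all $k$.

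Next I would read off the asymptotics of $Y$ from the two halves of the pseudo-orbit. For $k\ge 0$ we have $d_H(f^k(Y),f^k(S_n))<\eps$, and since $S_n\in\mathcal{C}^s$ forces $\diam(f^k(S_n))\to0$, there is $N$ with $\diam(f^k(Y))\le\diam(f^k(S_n))+2\eps<3\eps$ for all $k\ge N$. Thus $f^N(Y)\in\mathcal{C}^s_{3\eps}$, and because $3\eps<c/2$ the inclusion $\mathcal{C}^s_{3\eps}\subset\mathcal{C}^s$ of Kato gives $f^N(Y)\in\mathcal{C}^s$, i.e.\ $Y\in\mathcal{C}^s$. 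The symmetric argument on the past half (closeness to $f^k(U_n)$ for $k\le -1$, together with $U_n\in\mathcal{C}^u$ and $\mathcal{C}^u_{3\eps}\subset\mathcal{C}^u$) gives $Y\in\mathcal{C}^u$. Moreover $Y$ is nontrivial: from $d_H(Y,z_0)=d_H(Y,S_n)<\eps$ we obtain $\diam(Y)\ge\diam(S_n)-2\eps>r-2\eps>0$.

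It remains to contradict the existence of a nontrivial continuum $Y\in\mathcal{C}^s\cap\mathcal{C}^u$, and this is the step I expect to be the main obstacle. The clean statement to aim for is that in a cw-expansive homeomorphism a nontrivial continuum cannot be simultaneously stable and unstable, equivalently that $\mathcal{C}^s\cap\mathcal{C}^u$ consists only of singletons; granting this, $\diam(Y)>0$ is the desired contradiction. The favourable case is transparent: every pair $x,y\in Y$ is bi-asymptotic because $\diam(f^k(Y))\to0$ as $k\to\pm\infty$, so if one could guarantee $\sup_k\diam(f^k(Y))\le c$ then $Y\subset\Gamma_c(y)$ would be a connected subset of a totally disconnected set and hence a point. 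The genuine difficulty is that $\diam(f^k(Y))$ may exceed $c$ for the finitely many intermediate times, where shadowing only compares $f^k(Y)$ to the a priori large continua $f^k(S_n)$ and $f^k(U_n)$; so one cannot naively place $Y$ in a single dynamical ball. I would handle this either by invoking the cw-expansive structure theory (transversality: stable and unstable continua meet in totally disconnected sets), or by a preliminary reduction that replaces $S_n,U_n$ by nontrivial subcontinua and tunes $\eps$ so that the whole orbit $\{f^k(Y)\}$ stays within the cw-expansive scale $c$, at which point $\Gamma_c$-total-disconnectedness finishes the proof. Controlling that supremum across the intermediate times is precisely the delicate point on which the whole argument turns.
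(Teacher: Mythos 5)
Your construction of the pseudo-orbit (past orbit of $U_n$, future orbit of $S_n$, single seam of size $d_H(S_n,U_n)<\delta$), the extraction of a shadowing continuum $Y$, and the deductions that $Y\in\mathcal{C}^s\cap\mathcal{C}^u$ and that $\diam(Y)\ge r-2\eps>0$ are all correct and coincide with the paper's own argument (up to immaterial constants: the paper uses $\eps<c/4$ where you use $\eps<c/6$). However, the proof is not complete: the final step, showing that a nondegenerate continuum in $\mathcal{C}^s\cap\mathcal{C}^u$ contradicts cw-expansiveness, is exactly where you stop, and you acknowledge this. This is a genuine gap, not a routine detail, because membership in $\mathcal{C}^s\cap\mathcal{C}^u$ is an asymptotic condition with no uniform bound on the intermediate diameters $\diam(f^k(Y))$, so $Y$ need not lie in any dynamical ball $\Gamma_c(x)$, and total disconnectedness of $\Gamma_c(x)$ cannot be applied directly. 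Of your two suggested escapes, the first (``stable and unstable continua meet in totally disconnected sets'') is not a quotable consequence of the definition of cw-expansiveness: it is precisely equivalent to the statement you are trying to prove, so invoking it is circular unless you supply the proof. The second (shrinking $S_n$ and $U_n$ beforehand) fails on two counts: Hausdorff closeness of $S_n$ to $U_n$ does not produce small subcontinua $S_n'\subset S_n$, $U_n'\subset U_n$ that are Hausdorff-close to each other, and once $\diam(S_n')$ is small you lose the lower bound that forced $Y$ to be nondegenerate, so no contradiction results.

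The paper closes the gap by shrinking the shadowing continuum $Y$ itself, after its nondegeneracy is established, using two specific tools. First, by \cite[Corollary 5.5]{NADLER92} (boundary bumping), every nondegenerate continuum contains nondegenerate subcontinua of arbitrarily small diameter; choose such a $C'\subset Y$, which is still in $\mathcal{C}^s\cap\mathcal{C}^u$ since stability and unstability pass to subcontinua. Second, and this is the essential missing lemma in your writeup, \cite[Proposition 2.3.1]{ArtigueDend} gives a uniformity statement for cw-expansive homeomorphisms: a stable (resp.\ unstable) continuum of sufficiently small diameter lies in $\mathcal{C}^s_{\eps}$ (resp.\ $\mathcal{C}^u_{\eps}$), i.e.\ its entire forward (resp.\ backward) orbit has diameter at most $\eps$. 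Applying this to $C'$ yields $C'\in\mathcal{C}^s_{\eps}\cap\mathcal{C}^u_{\eps}$ with $2\eps<c$, hence $C'\subset\Gamma_c(x)$ for any $x\in C'$, and a nondegenerate connected subset of a totally disconnected set is the desired contradiction. Without this uniformity lemma (or an equivalent substitute, which would itself require proof), the argument does not go through; with it, your proof becomes the paper's proof.
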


\begin{proof}
Assume by contradiction that $C(f)$ has the shadowing property. Let $c\in(0,r)$ be a cw-expansive constant of $f$, $\eps\in(0,\frac{c}{4})$, and $\delta\in(0,\eps)$, given by the shadowing property of $C(f)$, be such that every $\delta$-pseudo-orbit of $C(f)$ is $\eps$-shadowed. Choose $n\in\N$ such that $d_H(S_n,U_n)<\delta$. The shadowing property of $C(f)$ ensures the existence of $C\in\mathcal{C}(X)$  satisfying
$$d_H(f^k(C),f^k(S_n))\leq\eps \,\,\,\,\,\, \text{for every} \,\,\,\,\,\, k\geq0 \,\,\,\,\,\, \text{and}$$
$$d_H(f^{-k}(C),f^{-k}(U_n))\leq\eps \,\,\,\,\,\, \text{for every} \,\,\,\,\,\, k\geq0.$$ Since $S_n\in\mathcal{C}^s$ and $U_n\in\mathcal{C}^u$, there is $k_0\in\N$ such that
$$\diam(f^k(S_n))\leq\eps \,\,\,\,\,\, \text{and} \,\,\,\,\,\, \diam(f^{-k}(U_n))\leq\eps \,\,\,\,\,\, \text{for every} \,\,\,\,\,\, k\geq k_0$$ and, consequently,
$$\diam(f^k(C))\leq2\eps \,\,\,\,\,\, \text{and} \,\,\,\,\,\, \diam(f^{-k}(C))\leq2\eps \,\,\,\,\,\, \text{for every} \,\,\,\,\,\, k\geq k_0.$$ It follows from the choice of $\eps$ that $$f^{k_0}(C)\in\mathcal{C}^s \,\,\,\,\,\, \text{and} \,\,\,\,\,\, f^{-k_0}(C)\in\mathcal{C}^u$$ and, consequently, $C\in\mathcal{C}^s\cap\mathcal{C}^u$. Note that $C$ is not a singleton since, in this case, $d_H(C,S_n)<\eps$ would imply $\diam(S_n)\leq\eps<r$, contradicting the assumption. 
By \cite[Corollary 5.5]{NADLER92} we choose a subcontinuum $C'\subset C$ such that $C'$ is not a singleton and has sufficiently small diameter, then \cite[Proposition 2.3.1]{ArtigueDend} ensures that $C'\in\mathcal{C}^s_{\eps}\cap\mathcal{C}^u_{\eps}$, contradicting the cw-expansiveness of $f$.
\end{proof}

\begin{remark}
It is important to note that in Theorem \ref{cw-cf}, the sequences $(S_n)_{n\in\N}$ and $(U_n)_{n\in\N}$ cannot be formed by $c$-stable and $c$-unstable continua, respectively. Indeed, in this case, any continuum that is accumulated by both $(S_n)_{n\in\N}$ and $(U_n)_{n\in\N}$ would be both $c$-stable and $c$-unstable, contradicting cw-expansiveness. This is more easily seen on Anosov diffeomorphisms, where local stable/unstable continua are contained in local stable/unstable manifolds and their transversality does not allow such an accumulation. 
\end{remark}

In what follows we prove the hypothesis of Theorem \ref{cw-cf} in some well known cases: among them are the Anosov diffeomorphisms and the continuum-wise hyperbolic homeomorphisms. To create the stable/unstable continua accumulating as in Theorem \ref{cw-cf}, we use global stable/unstable manifolds in the case of Anosov diffeomorphisms, while in the case of cw-hyperbolic homeomorphisms we use the global stable/unstable continua.

\begin{theorem}\label{dense}
Let $f\colon X \rightarrow X$ be a cw-expansive homeomorphism of a non-trivial compact metric space $(X,d)$, $c>0$ be a cw-expansive constant of $f$, and $\eps\in(0,\frac{c}{2})$. If there exist $x,y\in X$ such that
$$C^s(x):=\bigcup_{n\in\N}f^{-n}(C^s_{\eps}(f^n(x))) \,\,\,\,\,\, \text{and} \,\,\,\,\,\, C^u(y):=\bigcup_{n\in\N}f^{n}(C^u_{\eps}(f^{-n}(y)))$$
are dense in $X$, then $C(f)$ does not have the shadowing property.
\end{theorem}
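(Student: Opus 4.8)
The plan is to deduce the statement from Theorem \ref{cw-cf} by manufacturing, out of the two dense global stable/unstable sets, a sequence of pairs $(S_n,U_n)$ of non-trivial stable/unstable continua that collapse onto each other in the Hausdorff metric while keeping their diameters bounded below. The natural building blocks are the truncations
$$S_m:=f^{-m}\bigl(C^s_\eps(f^m(x))\bigr) \qquad\text{and}\qquad U_m:=f^{m}\bigl(C^u_\eps(f^{-m}(y))\bigr),$$
so that $C^s(x)=\bigcup_m S_m$ and $C^u(y)=\bigcup_m U_m$ by definition. First I would record the elementary inclusion $f\bigl(C^s_\eps(f^m(x))\bigr)\subset C^s_\eps(f^{m+1}(x))$, which follows directly from the definition of the local stable set (a point staying $\eps$-close to the forward orbit of $f^m(x)$ maps to one staying $\eps$-close to the forward orbit of $f^{m+1}(x)$, and the image of a connected set is connected). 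Applying $f^{-(m+1)}$ gives $S_m\subset S_{m+1}$, so $(S_m)_m$ is an increasing sequence of continua, each containing $x$; symmetrically $(U_m)_m$ is increasing, with each $U_m$ containing $y$.

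Two facts then need checking. The first is that each $S_m$ is genuinely stable, i.e. $S_m\in\mathcal{C}^s$, and likewise $U_m\in\mathcal{C}^u$. For $k\ge m$ one has $f^k(S_m)=f^{k-m}\bigl(C^s_\eps(f^m(x))\bigr)$, and the inclusion above shows these forward iterates have diameter at most $2\eps<c$; hence $C^s_\eps(f^m(x))$ is asymptotically stable by the cw-expansiveness of $f$ (the Kato inclusion $\mathcal{C}^s_{\eps}\subset\mathcal{C}^s$ recorded before Theorem \ref{cw-cf}, with the constants arranged so that $2\eps$ lies below the cw-expansive constant), so $\diam(f^k(S_m))\to0$ and $S_m\in\mathcal{C}^s$. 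The second, and more substantial, fact is that the increasing-union structure converts density into Hausdorff convergence: since $\overline{\bigcup_m S_m}=\overline{C^s(x)}=X$ and the $S_m$ are nested, a compactness argument (the open sets $\{z:\dista(z,S_m)<\eta\}$ increase with $m$, cover $X$ by density, hence one of them already equals $X$) yields $d_H(S_m,X)\to0$, and similarly $d_H(U_m,X)\to0$.

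With these in hand the conclusion is immediate: $d_H(S_m,U_m)\le d_H(S_m,X)+d_H(X,U_m)\to0$, giving condition (1) of Theorem \ref{cw-cf}, while $\diam(S_m)\to\diam(X)>0$ (as $X$ is non-trivial) and $\diam(U_m)\to\diam(X)>0$, so fixing $r:=\tfrac{1}{2}\diam(X)$ and passing to large indices gives condition (2). Reindexing these large truncations as $(S_n,U_n)_{n\in\N}$ and invoking Theorem \ref{cw-cf} shows that $C(f)$ does not have the shadowing property. I expect the main obstacle to be the Hausdorff-convergence step: one must use the nestedness in an essential way, since density of the union alone forces no individual $S_m$ to be close to $X$, and one must also keep track that the truncations are honest subcontinua so that Theorem \ref{cw-cf} applies. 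The verification that the truncations lie in $\mathcal{C}^s$ and $\mathcal{C}^u$ is where cw-expansiveness enters and should be routine given the cited Kato inclusion.
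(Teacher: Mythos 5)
Your proposal is correct and takes essentially the same route as the paper: the paper's proof likewise truncates $C^s(x)$ and $C^u(y)$ (as partial unions $\bigcup_{i=1}^{k_n}f^{-i}(C^s_{\eps}(f^i(x)))$, which by the nestedness you establish coincide with your single terms $f^{-m}(C^s_\eps(f^m(x)))$), uses density plus the increasing structure to get Hausdorff convergence of these truncations to $X$, and then feeds the resulting pairs $(S_n,U_n)$ into Theorem \ref{cw-cf}. The only differences are cosmetic: you prove explicitly the monotonicity $S_m\subset S_{m+1}$, the compactness argument giving $d_H(S_m,X)\to 0$, and the membership $S_m\in\mathcal{C}^s$, steps the paper asserts without detail (and with the same implicit reliance on Kato's result that continua whose forward iterates stay below the cw-expansive constant are stable).
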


\begin{proof}
It is enough to prove the existence of stable/unstable continua satisfying the hypothesis of Theorem \ref{cw-cf}. First, note that $C^s(x)$ and $C^u(y)$ are connected sets, since they are increasing unions of connected sets, and by hypothesis $\overline{C^s(x)}=X=\overline{C^u(y)}$. For each $n\in\N$, choose $k_n\in\N$ such that 
$$d_H\left(\overline{C^s(x)}, \bigcup_{i=1}^{k_n}f^{-i}(C^s_{\eps}(f^i(x)))\right)<\frac{\diam(X)}{n} \,\,\,\,\,\, \text{and}$$
$$d_H\left(\overline{C^u(y)}, \bigcup_{i=1}^{k_n}f^{i}(C^u_{\eps}(f^{-i}(y)))\right)<\frac{\diam(X)}{n}.$$
For each $n\in\N$, let
$$S_n=\bigcup_{i=1}^{k_n}f^{-i}(C^s_{\eps}(f^i(x))) \,\,\,\,\,\, \text{and} \,\,\,\,\,\, U_n=\bigcup_{i=1}^{k_n}f^{i}(C^u_{\eps}(f^{-i}(y))).$$
Thus, 
$$S_n\in\mathcal{C}^s \,\,\,\,\,\, \text{and} \,\,\,\,\,\, U_n\in\mathcal{C}^u \,\,\,\,\,\, \text{for every} \,\,\,\,\,\, n\in\N,$$
$$d_H(S_n,U_n)<\frac{2\diam(X)}{n}\to0 \,\,\,\,\,\, \text{when} \,\,\,\,\,\, n\to\infty$$
and
$$\diam(S_n)>\frac{\diam(X)}{2} \,\,\,\,\,\, \text{and} \,\,\,\,\,\, \diam(U_n)>\frac{\diam(X)}{2} \,\,\,\,\,\, \text{for every} \,\,\,\,\,\, n>1,$$ concluding the proof.
\end{proof}
We are able to prove the hypothesis of this theorem in the case of transitive cw-hyperbolic homeomorphisms.

\begin{definition}[Continuum-wise hyperbolicity]
We say that $f$ satisfies the $cw$-local-product-structure if for each $\eps>0$ there exists $\delta>0$ such that $$C^s_\eps(x)\cap C^u_\eps(y)\neq \emptyset \,\,\,\,\,\, \text{ whenever } \,\,\,\,\,\, d(x,y)< \delta.$$ The $cw$-expansive homeomorphisms satisfying the $cw$-local-product-structure are called $cw$-hyperbolic.
\end{definition}

For more information about cw-hyperbolicity, see \cite{ArrudaCarvalhoSarmiento2024}, \cite{ARTIGUE2024512} and \cite{CarvalhoRigo2023}.

\begin{definition}[Transitivity]
A map $f\colon X\to X$ is called \emph{transitive}, if for any pair $U,V\subset X$ of non-empty open subsets, there exists $n\in\N$ such that $$f^n(U)\cap V\neq\emptyset.$$
\end{definition}

The following is the main result of this section.

\begin{theorem}\label{cw-hyp} If $f\colon X\to X$ is a transitive cw-hyperbolic homeomorphism, then $C(f)$ does not have the shadowing property. In particular, if $M$ is a manifold of dimension at least two and $f\colon M\to M$ is a transitive Anosov diffeomorphism, then $C(f)$ does not have the shadowing property.
\end{theorem}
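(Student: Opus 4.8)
The plan is to deduce everything from Theorem \ref{dense}. Since a $cw$-hyperbolic homeomorphism is by definition $cw$-expansive, I fix a $cw$-expansive constant $c>0$ and some $\eps\in(0,\frac{c}{2})$; Theorem \ref{dense} then reduces the statement to producing points $x,y\in X$ for which the global stable continuum $C^s(x)$ and the global unstable continuum $C^u(y)$ are dense in $X$. In other words, the entire content of the theorem becomes the $cw$-analogue of the classical fact that in a transitive Anosov system every stable and every unstable manifold is dense, the role of hyperbolicity of the derivative being played throughout by the $cw$-local-product-structure.

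First I would assemble two ingredients. On the one hand, $X$ has no isolated points (it is a Peano continuum), so transitivity yields a residual set of transitive points and I may fix $z\in X$ whose forward and backward orbits are both dense. On the other hand, $cw$-hyperbolicity forces non-degenerate local stable and unstable continua to exist: feeding two distinct nearby points into the $cw$-local-product-structure produces a point of $C^s_\eps\cap C^u_\eps$ distinct from one of them, so some $C^s_\eps$ or $C^u_\eps$ is non-degenerate, and transitivity then carries this non-degeneracy along the orbit. This is what prevents the candidate continua from collapsing to singletons. Note also that it suffices to treat the unstable case, since $f^{-1}$ is again transitive and $cw$-hyperbolic and interchanges the roles of $C^s$ and $C^u$.

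The hard part will be the density of a single global unstable continuum, and here I would use the $cw$-local-product-structure as a substitute for the inclination lemma, which is unavailable for mere homeomorphisms. Concretely, I would let $\delta>0$ correspond to $\eps$ through the $cw$-local-product-structure and consider the closure of the union of the unstable continua encountered along the dense orbit of $z$; this closure is $f$-invariant, and each time a backward iterate of $z$ returns $\delta$-close to a prescribed open set $U$, the product structure forces an unstable continuum to meet a stable continuum inside $U$, so that the unstable family meets $U$. The genuine obstacle is to upgrade this density of the whole unstable family to the density of one fixed leaf $C^u(y)$, as Theorem \ref{dense} requires: one must transport the point supplied by the $cw$-local-product-structure under the iterates $f^{\pm n}$ while keeping it on the same global unstable continuum, and combine $f$-invariance with transitivity to conclude $\overline{C^u(y)}=X$. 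Once a dense $C^u(y)$ and, via $f^{-1}$, a dense $C^s(x)$ are in hand, Theorem \ref{dense} gives the conclusion immediately.

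For the Anosov specialization I would simply verify that a transitive Anosov diffeomorphism $f\colon M\to M$ with $\dim M\ge 2$ satisfies all the hypotheses above: Anosov diffeomorphisms are expansive, hence $cw$-expansive; the classical local product structure of the stable and unstable manifolds gives the $cw$-local-product-structure, so $f$ is $cw$-hyperbolic; and $\dim M\ge 2$ guarantees that the stable and unstable manifolds, and hence the corresponding local continua, are non-degenerate. Since it is classical that transitive Anosov diffeomorphisms have dense stable and unstable manifolds, the density hypotheses of Theorem \ref{dense} hold and $C(f)$ fails the shadowing property.
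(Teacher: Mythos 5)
Your reduction to Theorem \ref{dense} is exactly the paper's strategy, and you correctly identify where the difficulty lies --- but you then leave precisely that step unproved. You write that ``the genuine obstacle is to upgrade this density of the whole unstable family to the density of one fixed leaf $C^u(y)$'' and describe what ``one must'' do without doing it; as it stands, your argument only shows that the union of unstable continua along a dense orbit is dense, which is strictly weaker than what Theorem \ref{dense} needs. Moreover, your proposed mechanism cannot close the gap with transitivity alone: to place a point of $C^s(x)$ near a prescribed $z$, you need some forward image $f^{m}(B(z,\eps))$ to come $\delta$-close to the point $f^{m}(x)$, and this target ball \emph{moves with} $m$, so transitivity (or a dense orbit of $z$) gives you no control over it. (Your non-degeneracy discussion is also unnecessary: in the proof of Theorem \ref{dense} the lower diameter bound on $S_n,U_n$ falls out of the density hypothesis itself.)

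The paper closes the gap with two ingredients you are missing. First, it invokes the fact that a transitive cw-hyperbolic homeomorphism has a topologically mixing power $f^{n}$ (\cite[Corollary 2.2 (4)]{ARTIGUE2024512}). Second, it fixes a finite cover of $X$ by $\delta$-balls $B(x_1,\delta),\dots,B(x_j,\delta)$, where $\delta$ comes from the cw-local-product-structure; mixing then yields $k$ with $f^{nk}(B(z,\eps))\cap B(x_i,\delta)\neq\emptyset$ for \emph{every} $i$ simultaneously, so in particular for the ball containing $f^{nk}(x)$. This is exactly what defeats the moving-target problem. The product structure then gives $y\in C^s_\eps(f^{nk}(x))\cap C^u_\eps(w)$ with $w\in f^{nk}(B(z,\eps))$, and your worry about ``keeping the point on the same global continuum'' dissolves once you use the definition $C^s(x)=\bigcup_{m}f^{-m}(C^s_\eps(f^{m}(x)))$: the point $f^{-nk}(y)$ lies in $C^s(x)$ automatically, and since $y\in C^u_\eps(w)$, its backward iterate stays $\eps$-close to $f^{-nk}(w)\in B(z,\eps)$, hence lands in $B(z,2\eps)$. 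The same argument applied to $f^{-1}$ handles the unstable continua. So your outline has the right skeleton, but the missing step is the actual content of the paper's proof, and filling it requires the mixing result you never invoke.
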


\begin{proof}
We will prove that $C^s(x)$ and $C^u(x)$ are dense in $X$ for every $x\in X$ (the proof is as in the case of topologically hyperbolic homeomorphisms \cite[Theorem 3.2.6]{AOKIHIRAIDE}). Theorem \ref{dense} ensures then that $C(f)$ does not have the shadowing property. Let $n\in\N$ be such that $f^n$ is topologically mixing (see \cite[Corollary 2.2 (4)]{ARTIGUE2024512}). To prove that $C^s(x)$ is dense in $X$, we consider any $z\in X$ and $\eps>0$, and prove the existence of a point in $C^s(x)\cap B(z,2\eps)$. Let $\delta>0$, given by the cw-local-product-structure of $f$, be such that 
$$C^s_\eps(x)\cap C^u_\eps(y)\neq \emptyset \,\,\,\,\,\, \text{ whenever } \,\,\,\,\,\, d(x,y)< 2\delta$$
and choose a finite set $\{x_1,\dots,x_j\}$ such that $\bigcup_{i=1}^j B(x_i,\delta)$ covers $X$. Since $f^n$ is topologically mixing, there exists $k\in\N$ such that $$f^{nk}(B(z,\eps))\cap B(x_i,\delta)\neq\emptyset \,\,\,\,\,\, \text{for every} \,\,\,\,\,\, i\in\{1,\dots,j\}.$$ Since $f^{nk}(x)\in B(x_i,\delta)$ for some $i\in\{1,\dots,j\}$ and there exists $w\in f^{nk}(B(z,\eps))\cap B(x_i,\delta)$, the cw-local-product-structure ensures the existence of $y\in C^s_\eps(f^{nk}(x))\cap C^u_\eps(w)$. Thus, 
$$f^{-nk}(y)\in f^{-nk}(C^s_\eps(f^{nk}(x)))\subset C^s(x)$$
and also
$f^{-nk}(y)\in C^u_{\eps}(f^{-nk}(w))$, which implies $f^{-nk}(y)\in B(z,2\eps)$ and consequently $$f^{-nk}(y)\in C^s(x)\cap B(z,2\eps).$$ Since this holds for each $\eps>0$ and each $z\in X$, it follows that $C^s(x)$ is dense in $X$. To prove that $C^u(x)$ is dense in $X$, we note that $f^{-1}$ is also a transitive cw-hyperbolic homeomorphism, so we can assume that $n\in\N$ was chosen such that $f^{-n}$ is topologically mixing and $k\in\N$ was chosen such that $$f^{-nk}(B(z,\eps))\cap B(x_i,\delta)\neq\emptyset \,\,\,\,\,\, \text{for every} \,\,\,\,\,\, i\in\{1,\dots,j\}.$$ Since $f^{-nk}(x)\in B(x_i,\delta)$ for some $i\in\{1,\dots,j\}$ and there exists $w'\in f^{-nk}(B(z,\eps))\cap B(x_i,\delta)$, the cw-local-product-structure ensures the existence of $y'\in C^u_\eps(f^{-nk}(x))\cap C^s_\eps(w)$. Thus, 
$$f^{nk}(y')\in f^{nk}(C^u_\eps(f^{-nk}(x)))\subset C^u(x)$$
and also
$f^{nk}(y')\in C^s_{\eps}(f^{nk}(w))$, which implies $f^{nk}(y')\in B(z,2\eps)$ and consequently $$f^{nk}(y')\in C^u(x)\cap B(z,2\eps).$$
Since this holds for each $\eps>0$ and each $z\in X$, it follows that $C^u(x)$ is dense in $X$ and concludes the proof. The statement for Anosov diffeomorphisms follows after noting that any Anosov diffeomorphism is cw-hyperbolic on a manifold with dimension at least 2.
\end{proof}

We point out that recently M. Carvalho et al. \cite[Theorem C]{carvalho2025sufficientconditionstransitivityhomeomorphisms} have shown if $f:M \rightarrow M$ is $\Omega$-stable $C^1$
 diffeomorphism of a smooth closed Riemannian manifold $M$
 with dimension at least 
2, then $f$ having barycenter property in $Per(f)$ implies that $f$ is a topologically transitive Anosov diffeomorphism. Hence, this result gives another method for constructing nice diffeomorphisms $f$ which have the shadowing property but the hyperspace mapping $C(f)$ does not have the shadowing property.

We also discuss the relation between transitivity of $f$ and of $C(f)$. It is proved in \cite{Peris} that $2^f$ is transitive if, and only if, $f$ is weakly mixing, that is, $f\times f$ is transitive. The case of $C(f)$ is not clear since $C(f)$ is not transitive when $f$ is defined on dendrites but it is transitive when $f$ is the shift map on $[0,1]^{\mathbb{Z}}$ \cite{AcostaGerardoIllanes}. We prove that $C(f)$ is not transitive when $f$ is a cw-expansive homeomorphism of a compact metric space with positive topological dimension.

\begin{theorem}
If $f\colon X\to X$ is a cw-expansive homeomorphism of a compact metric space with positive topological dimension, then $C(f)$ is not transitive.
\end{theorem}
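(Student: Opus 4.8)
The plan is to show that $C(f)$ fails transitivity by exhibiting a non-empty open set in $\mathcal{C}(X)$ whose forward orbit misses another non-empty open set. The natural strategy is to separate the ``large'' continua from the ``small'' ones in terms of the cw-expansive constant $c$. Since $X$ has positive topological dimension, it is not totally disconnected, and hence there exists a non-degenerate continuum $K\subset X$; in fact I expect to arrange $\diam(K)$ to be comparable to a fixed threshold. The target open set $\mathcal{V}$ will be a small Hausdorff-ball around $K$, so that every element of $\mathcal{V}$ is a continuum of diameter bounded below by some $r>0$.

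The key dynamical input is that singletons form an invariant set: $C(f)(\{x\})=\{f(x)\}$, so the set of singletons $\Delta=\{\{x\}:x\in X\}$ is $C(f)$-invariant and is a copy of $X$ sitting inside $\mathcal{C}(X)$. The first step is to choose the source open set $\mathcal{U}$ to be a small Hausdorff-neighborhood of some singleton $\{x_0\}$, so that every $A\in\mathcal{U}$ has small diameter, say $\diam(A)<\eta$ for a suitably small $\eta$. The heart of the argument is then to prove a diameter-control statement: if $A$ is a continuum with $\diam(A)$ small, then $\diam(f^k(A))$ stays uniformly small for all $k\geq 0$ (or cannot grow to exceed $r$). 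This is exactly where cw-expansiveness enters, via the inclusions $\mathcal{C}^s_\eps\subset\mathcal{C}^s$ and $\mathcal{C}^u_\eps\subset\mathcal{C}^u$ stated before Theorem \ref{cw-cf}, together with the local stable/unstable continuum structure used in Theorem \ref{cw-cf}.

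The main obstacle, and the step requiring the most care, is establishing that small continua cannot grow large under forward iteration. A priori a continuum of tiny diameter could expand, so a naive ``expansion is impossible'' claim is false in general; what is true for cw-expansive maps is a dichotomy coming from the structure of $\Gamma_c$. My plan here is to argue by contradiction: suppose instead that for every neighborhood $\mathcal{U}$ of a singleton, the forward orbit of $\mathcal{U}$ enters $\mathcal{V}$. Feeding this back through the invariance of $\Delta$ and a compactness/limit argument, I would extract a non-degenerate continuum $C$ all of whose forward iterates have diameter at most $2\eps<c$, placing $C\in\mathcal{C}^s_\eps\subset\mathcal{C}^s$; applying the same reasoning to $f^{-1}$ (equivalently, using that a transitive system has dense backward orbits as well) I would obtain $C\in\mathcal{C}^u_\eps$ for a sub-continuum, so that a small non-degenerate sub-continuum $C'$ lies in $\Gamma_c(x)$ for some $x$, exactly as in the final contradiction of Theorem \ref{cw-cf}. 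This contradicts cw-expansiveness, which asserts each $\Gamma_c(x)$ is totally disconnected.

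Concretely, the cleaner route I would pursue is to avoid the full transitivity hypothesis and directly produce the obstruction. Fix $\eps\in(0,\frac{c}{2})$ and let $\mathcal{V}$ be the set of continua meeting a prescribed pair of points at distance $>c$ (possible since $X$ has a non-degenerate continuum, so two points at distance bounded below exist inside a continuum). If $C(f)$ were transitive, then the singleton-neighborhood $\mathcal{U}$ would have some iterate $C(f)^N(\mathcal{U})$ meeting $\mathcal{V}$, producing a continuum $A$ with $\diam(A)$ arbitrarily small but $\diam(f^N(A))>c$. Iterating this for a sequence of shrinking $\mathcal{U}$'s and passing to a Hausdorff limit of suitably chosen sub-continua of the $f^j(A)$ would yield a non-degenerate continuum with all forward diameters at most $\eps$, hence in $\mathcal{C}^s_\eps\subset\mathcal{C}^s$, and the transitive (mixing) dynamics would simultaneously force it into $\mathcal{C}^u$, reproducing the $\Gamma_c$ contradiction. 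Thus transitivity of $C(f)$ is incompatible with cw-expansiveness on a space of positive dimension, which is exactly the assertion.
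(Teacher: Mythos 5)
Your proposal has a genuine gap at its final and crucial step. After producing, from shrinking neighborhoods of singletons and the transitivity assumption, continua $A$ with $\diam(A)$ tiny but $\diam(f^N(A))>c$, you pass to a Hausdorff limit of sub-continua and claim to obtain a non-degenerate continuum in $\mathcal{C}^s_\eps\subset\mathcal{C}^s$, and then assert that ``the transitive (mixing) dynamics would simultaneously force it into $\mathcal{C}^u$, reproducing the $\Gamma_c$ contradiction.'' That last assertion is not an argument, and it is precisely the point where the proof must do work. The existence of non-degenerate stable continua and, separately, of non-degenerate unstable continua is \emph{not} a contradiction: Kato proved that every cw-expansive homeomorphism of a compactum of positive dimension has both in abundance. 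The contradiction with cw-expansiveness requires a \emph{single} non-degenerate continuum that is simultaneously $\eps$-stable and $\eps$-unstable, so that it lies inside some $\Gamma_c(x)$. In Theorem \ref{cw-cf} of the paper that double smallness is exactly what the shadowing orbit delivers (it tracks a stable continuum forward and an unstable one backward); with only transitivity of $C(f)$ assumed, nothing in your construction ties the forward and backward behavior of one limit continuum together. (A secondary point: your limit is oriented backwards anyway — sub-continua taken at the first time the diameter of $f^j(A)$ crosses a threshold have \emph{backward} iterates of small diameter, so the limit lands in $\mathcal{C}^u_\eps$, not $\mathcal{C}^s_\eps$.)

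What is missing is the persistence lemma the paper actually uses, namely \cite[Proposition 2.2]{Kato1}: for every $\eps>0$ there is $\delta>0$ such that if a continuum $C$ has $\diam(C)\leq\delta$ and $\diam(f^k(C))>\eps$ for some $k$, then $\diam(f^n(C))\geq\delta$ for all $n\geq k$ — once a small continuum grows, it can never become small again. With this lemma the theorem is almost immediate: a continuum with dense forward orbit in $\mathcal{C}(X)$ must have $\diam(f^k(C))<\delta$ (near a singleton), later $\diam(f^{k'}(C))$ close to $\diam(A)$ for a fixed non-degenerate continuum $A$ (which exists by positive dimension), and later still $\diam(f^{k''}(C))<\delta$ again, contradicting persistence. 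You correctly identified this oscillation of the diameter along a dense orbit, but the oscillation alone yields no contradiction: cw-expansiveness by itself happily tolerates continua whose diameters grow and shrink, and ruling that out for small initial diameter is exactly the content of Kato's lemma, which your proposal neither proves nor invokes.
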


\begin{proof}
This is a direct consequence of \cite[Proposition 2.2]{Kato1} which states that for every $\eps>0$ there exists $\delta>0$ such that if a continuum $C$ satisfies $\diam(C)\leq\delta$ and there exists $k\in\N$ such that $\diam(f^k(C))>\eps$, then 
$$\diam(f^n(C))\geq\delta \,\,\,\,\,\, \text{for every} \,\,\,\,\,\, n\geq k.$$ Let $A\subset X$ be a non-trivial continuum with $\eps=\diam(A)>0$ (which exists since $X$ has positive topological dimension) and choose $\delta\in(0,\eps)$ as in the result of Kato above for this $\eps$. Thus, if a continuum $C\subset X$ has a dense future orbit in $\mathcal{C}(X)$, then it has to accumulate at trivial continua of the form $\{x\}$, so there exists $k\in\N$ such that $\diam(f^k(C))<\delta$, and also at the non-trivial continuum $A$, so there exists $k'>k$ such that $\diam(f^{k'}(C))>\eps$. But since the future orbit of $C'$ accumulates at $\{x\}$, there exists $k''>k'$ such that $\diam(f^{k''}(C))<\delta$ contradicting Kato's result.
\end{proof}


\section{Dendrites and the shadowing property for \texorpdfstring{$C(f)$}{Cf}}

The goal of this section is two-fold: prove that for a dendrite monotone map $f:D \rightarrow D$, $f$ has the shadowing property if and only if $C(f)$ does and give algorithms which can be used to show that abundant of dendrites admit homeomorphisms with the shadowing property, including the universal dendrite of order $n>2$. We begin by stating necessary definitions.

\begin{definition}[Dendrites]
 A \emph{dendrite} is a Peano continuum which does not contain a simple closed curve.
\end{definition}

Dendrites enjoy a variety of characterizations and properties.  We refer  the reader to \cite[Chapter X]{NADLER92} for further general information on the topic. A characterization that will be useful for us is the following:
\begin{theorem}\label{thm:dendritecharacterization}\cite[Charactrization of Dendrites, Theorem 10.10]{NADLER92} 
A continuum $X$ is a dendrite if, and only if, the intersection of any two connected subsets of $X$ is connected.
\end{theorem}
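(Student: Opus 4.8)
The plan is to prove both implications, the forward one being elementary and the backward one requiring the structure theory of continua. Since $X$ is assumed to be a continuum throughout, I would freely use the standard facts about Peano continua and dendrites collected in \cite[Chapter X]{NADLER92}.

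For the forward implication, assume $X$ is a dendrite. I would first record the two structural facts I need: a dendrite is uniquely arcwise connected, and if $z$ lies in the interior of the arc $[x,y]$ then $z$ separates $x$ from $y$ in $X$. The latter follows from unique arcwise connectedness: if $x$ and $y$ stayed in one component of $X\setminus\{z\}$, then local arcwise connectedness would produce a second arc from $x$ to $y$ missing $z$, and two distinct arcs with common endpoints contain a simple closed curve, which a dendrite forbids. The key step is then the claim that any connected set $C$ containing two points $x,y$ must contain the whole arc $[x,y]$: indeed, if some interior point $z\in[x,y]$ were missing from $C$, then $C\subset X\setminus\{z\}=U\sqcup V$ with $x\in U$ and $y\in V$, contradicting connectedness of $C$. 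Granting this, if $A,B$ are connected and $p,q\in A\cap B$, then $[p,q]\subset A$ and $[p,q]\subset B$, hence $[p,q]\subset A\cap B$; so $A\cap B$ is arcwise connected and in particular connected (the cases where $A\cap B$ is empty or a single point being trivial).

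For the backward implication, assume that the intersection of any two connected subsets of the continuum $X$ is connected. Ruling out simple closed curves is immediate: a simple closed curve $S\subset X$ decomposes as $S=A\cup B$ into two subarcs meeting exactly in their common endpoints $\{u,v\}$, and then $A,B$ are connected while $A\cap B=\{u,v\}$ is not, a contradiction. The remaining and main task is to show that $X$ is locally connected. Here I would argue by contradiction and invoke the classical fact that a continuum is locally connected if and only if it contains no continuum of convergence \cite{NADLER92}: failing local connectedness, there is a nondegenerate subcontinuum $K$ which is the Hausdorff limit of pairwise disjoint subcontinua $K_n$ with $K_n\cap K=\emptyset$. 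The goal is to convert this configuration into two connected subsets with disconnected intersection. The model to keep in mind is the closed topologist's sine curve, where $K$ is the limit segment $L$ and the winding graph $G$ is a single connected set accumulating on every point of $L$; choosing two accumulation points $a,b\in L$, the sets $L$ and $G\cup\{a,b\}$ are connected while their intersection is $\{a,b\}$.

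The hard part is exactly this conversion, and it is where I expect the main obstacle to lie: the disjoint continua $K_n$ do not themselves form a connected set (one can always split off finitely many of them), so extracting a single connected set that accumulates on $K$ in at least two points requires more than the hereditary unicoherence that the hypothesis trivially supplies. I would therefore follow \cite[Theorem 10.10]{NADLER92}, where this equivalence is obtained inside a longer cycle of characterizations of dendrites, rather than attempt an ad hoc construction; pathological examples such as the pseudo-arc, which contains no arcs at all and yet must fail the intersection property, show that no naive arc-based argument can succeed in full generality.
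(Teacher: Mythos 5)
The paper gives no proof of this statement at all: it is quoted directly from Nadler's book as \cite[Theorem 10.10]{NADLER92}, so the only ``paper proof'' to compare against is a citation. Measured against that, your proposal proves strictly more than the paper does, and what you do prove is correct. The forward implication is sound: in a dendrite (by the paper's definition, a Peano continuum containing no simple closed curve), every interior point $z$ of the unique arc $[x,y]$ separates $x$ from $y$ in $X$ --- your argument, using that components of the open set $X\setminus\{z\}$ are open and arcwise connected and that two distinct arcs with common endpoints yield a simple closed curve, is fine --- hence every connected set containing $x$ and $y$ contains $[x,y]$, so $A\cap B$ is arcwise connected whenever $A$ and $B$ are connected. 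The elimination of simple closed curves in the converse (two subarcs of $S$ meeting precisely in their two common endpoints) is also correct.

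The genuine gap is the one you flag yourself: the implication ``intersection property $\Rightarrow$ locally connected'' is never established, and your plan for it defers exactly the hard step to \cite{NADLER92}, so as a self-contained argument the converse is incomplete (though as a partial reproof plus the same citation, it is consistent with the paper's own treatment). In addition, the classical fact you invoke is misstated: it is \emph{false} that a continuum is locally connected if and only if it contains no continuum of convergence. That equivalence characterizes \emph{hereditarily} locally connected continua; for instance the closed disk is locally connected, yet the segments $\{1/n\}\times[0,1]$ are pairwise disjoint subcontinua converging in the Hausdorff metric to the segment $\{0\}\times[0,1]$, which is disjoint from all of them and hence is a continuum of convergence. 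What is true --- and is the only direction your outline uses --- is that a continuum failing local connectedness contains a continuum of convergence. With that correction your plan is coherent, but the missing idea (converting a continuum of convergence into two connected sets with disconnected intersection, which your topologist's-sine-curve model only illustrates in one special configuration) is precisely the substance of the converse, so the blind attempt stops short of a proof.
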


We will also need pointwise behavior of dendrites, in particular the following definition. 
\begin{definition}[Order of a point]
    Let $X$ be a dendrite and $p\in X$. The \emph{order of $p$ in $X$} is the number of connected components of $X \setminus \{p\}$. We will say  that \emph{$p$ is a branch point of $X$} if the order of $p$ in $X$ is at least 3.
\end{definition}
Each point of a dendrite has a countable order. Moreover, the set of branch points of a dendrite is countable \cite[Theorem 10.23]{NADLER92}.
 
\begin{definition}[Monotone map]
A mapping $f:X \rightarrow X$ is said to be \emph{monotone} if $f^{-1}(p)$ is connected for all $p \in X$. When $X$ happens to be a compact metric space and $f$ is a surjection, this is equivalent to saying that $f^{-1}(A)$ is connected for all connected sets $A \subseteq X$ \cite[Ex 8.46]{NADLER92}.
\end{definition}

We refer the reader  to \cite[Chapter XIII]{NADLER92} for further information on monotone maps. Clearly, every homeomorphism is a monotone map, but we do not restrict ourselves to the case of invertible maps. Thus, it is important to note that in Definition \ref{shadowing} of the shadowing property, when $f$ is not invertible, we consider pseudo-orbits indexed by the positive integer numbers $\mathbb{N}$ while the shadowing orbit is a future orbit. It is a classical result in the shadowing theory that positive shadowing, finite shadowing, and shadowing are equivalent when the underlying space is compact (see \cite{PilyuginShadowingBook1999} for more details). We also recall the following theorem of Fernandez-Good that will be used in the proof of Theorem \ref{thm:dendrite} below.

\begin{theorem}\cite[Theorem 6]{FernandezGood}\label{FG}
If $X$ is a compact metric space, then $f:X \rightarrow X$ has the shadowing property if, and only if, $2^f: 2^X \rightarrow 2^X$ has the shadowing property.   
\end{theorem}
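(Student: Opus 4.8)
The plan is to prove the two implications separately, the reverse one being essentially formal and the forward one carrying all the content. For the direction that $2^f$ having shadowing implies $f$ has shadowing, the key observation is that $x\mapsto\{x\}$ isometrically embeds $(X,d)$ into $(2^X,d_H)$ and intertwines $f$ with $2^f$, since $d_H(\{x\},\{y\})=d(x,y)$ and $2^f(\{x\})=\{f(x)\}$. Thus any $\delta$-pseudo-orbit $(x_n)$ of $f$ yields the $\delta$-pseudo-orbit $(\{x_n\})$ of $2^f$; if $A\in 2^X$ $\eps$-shadows the latter, then $f^n(A)\subseteq B(x_n,\eps)$ for all $n$, so any single point $y\in A$ satisfies $d(f^n(y),x_n)<\eps$ and hence $\eps$-shadows $(x_n)$. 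This returns for $f$ the same $\delta$ that $2^f$ provides.

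For the main direction, I would assume $f$ has shadowing and, given $\eps>0$, take $\delta>0$ produced by the shadowing of $f$ for the constant $\eps/2$, claiming this same $\delta$ works for $2^f$. The crucial elementary ingredient is an extension property of set-pseudo-orbits: if $d_H(f(A_n),A_{n+1})<\delta$, then forward every $a\in A_n$ admits $a'\in A_{n+1}$ with $d(f(a),a')<\delta$, because $f(a)\in f(A_n)\subseteq B(A_{n+1},\delta)$; and backward every $b\in A_{n+1}$ admits $a\in A_n$ with $d(f(a),b)<\delta$, because $b\in A_{n+1}\subseteq B(f(A_n),\delta)$. Iterating both directions, for every index $m$ and every point $a\in A_m$ one builds a genuine $\delta$-pseudo-orbit $(x_k)_k$ of $f$ with $x_m=a$ and $x_k\in A_k$ for all $k$.

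Now fix a $\delta$-pseudo-orbit $(A_n)$ of $2^f$. For each $m$ and each $a\in A_m$ I would choose such a pseudo-orbit passing through $a$ at time $m$ and let $y_{m,a}$ be a point of $X$ that $(\eps/2)$-shadows it; then set $K=\operatorname{clos}\{y_{m,a}\}$, which lies in $2^X$ by compactness of $X$. The verification of $d_H(f^n(K),A_n)\le\eps$ splits into the two Hausdorff inclusions. For $f^n(K)\subseteq B(A_n,\eps)$: each $y_{m,a}$ shadows a pseudo-orbit lying inside the sets, so $d(f^n(y_{m,a}),x_n)<\eps/2$ with $x_n\in A_n$, and this $(\eps/2)$-estimate passes to limits, so every point of $K$ has its $f^n$-image within $\eps/2\le\eps$ of $A_n$. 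For $A_n\subseteq B(f^n(K),\eps)$: given $a\in A_n$, the point $y_{n,a}\in K$ satisfies $d(f^n(y_{n,a}),a)<\eps/2<\eps$. Hence $(A_n)$ is $\eps$-shadowed by $K$, and $2^f$ has shadowing.

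The main obstacle is the final assembly: producing a single compact set $K$ that, under each $f^n$, simultaneously stays near $A_n$ and covers $A_n$. This is exactly what the twofold indexing $y_{m,a}$ resolves — the "stay near" inclusion needs every selected pseudo-orbit to remain inside the sets, whereas the "cover" inclusion needs, for each fixed time $n$, pseudo-orbits threading through every point of $A_n$, which is precisely what the backward/forward extension supplies. A minor but genuine point is that passing to the closure to make $K$ compact must not break the "stay near" bound, which is the reason for shadowing with $\eps/2$ rather than $\eps$. For non-invertible $f$ one works with one-sided pseudo-orbits indexed by $\N$, building $x_0,\dots,x_{m-1}$ by backward extension and $x_{m+1},x_{m+2},\dots$ by forward extension; the equivalence of one-sided, finite, and two-sided shadowing on compact spaces noted above absorbs any remaining discrepancy.
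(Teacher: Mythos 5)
This statement is not proved in the paper at all: it is imported verbatim as \cite[Theorem 6]{FernandezGood} and used as a black box in the proof of Theorem~\ref{thm:dendrite}, so there is no in-paper argument to compare yours against. Judged on its own, your proof is correct and complete. The easy direction via the isometric, equivariant embedding $x\mapsto\{x\}$ is exactly right ($d_H(f^n(A),\{x_n\})<\eps$ forces $d(f^n(y),x_n)<\eps$ for every $y\in A$). For the hard direction, your two ingredients are sound: the forward/backward threading lemma (from $d_H(f(A_n),A_{n+1})<\delta$ one extends any $a\in A_m$ to a point $\delta$-pseudo-orbit $(x_k)$ with $x_k\in A_k$ and $x_m=a$, the backward step needing no invertibility since it only selects a point of $A_{n}$ whose image is $\delta$-close to the given point of $A_{n+1}$), and the assembly $K=\clos\{y_{m,a}\}$, where the double indexing handles the two Hausdorff inclusions separately: threading through \emph{every} $a\in A_n$ at time $n$ gives $A_n\subseteq B(f^n(K),\eps)$, while the fact that every selected pseudo-orbit stays inside the sets gives $d(f^n(y_{m,a}),A_n)<\eps/2$, a bound that survives passage to the closure by continuity of $f^n$ and of $z\mapsto d(z,A_n)$, which is precisely why shadowing at level $\eps/2$ is the right choice. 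This yields $d_H(f^n(K),A_n)\le\eps/2<\eps$ with a single $\delta$ depending only on $\eps$, as required; the one-sided ($\N$-indexed) adjustment for non-invertible $f$ is also handled correctly.
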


The following is one of the main results of this section.

\begin{theorem}\label{thm:dendrite} Let $D$ be a dendrite and $f:D \rightarrow D$ be a monotone map. Then, $f$ has the shadowing property if, and only if, $C(f)$ does. 
\end{theorem}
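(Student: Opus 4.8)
The plan is to deduce the nontrivial implication ``$f$ has shadowing $\Rightarrow$ $C(f)$ has shadowing'' from the Fernandez--Good theorem (Theorem \ref{FG}) together with the tree geometry of dendrites, the reverse implication being elementary. For the reverse, note that if $(x_n)_{n\geq 0}$ is a $\delta$-pseudo-orbit of $f$ in $D$, then the singletons $(\{x_n\})_{n\geq 0}$ form a $\delta$-pseudo-orbit of $C(f)$; a shadowing continuum $E$ satisfies $f^n(E)\subseteq B(x_n,\eps)$ for all $n$, so any $y\in E$ is an $\eps$-shadowing point for $(x_n)$. Thus I concentrate on the forward implication.

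Two reductions drive the argument. First, since $D$ is compact, the shadowing property of both $f$ and $C(f)$ does not depend on the choice of a topologically equivalent metric (the identity map between two such metrics and its inverse are automatically uniformly continuous), so I may replace $d$ by a \emph{convex} metric on $D$, which exists by Bing's theorem for Peano continua; in such a metric the unique arc $[a,b]$ joining two points is a geodesic and $d$ is additive along it, so $D$ becomes an $\R$-tree. Second, for a compact $A\subseteq D$ let $\Gamma(A)$ be the smallest subcontinuum of $D$ containing $A$: this is well defined because, by Theorem \ref{thm:dendritecharacterization}, the intersection of the subcontinua containing $A$ is again a subcontinuum, and one checks $\Gamma(A)=\bigcup_{a,b\in A}[a,b]$. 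The resulting retraction $\Gamma\colon 2^D\to\mathcal{C}(D)$ is what converts compact shadowers into continuum shadowers.

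The forward implication rests on two properties of $\Gamma$. \textbf{(A) Commutation with $f$:} because $f$ is monotone, it restricts to a monotone surjection $D\to f(D)$ onto a subdendrite, for which preimages of subcontinua of $f(D)$ are subcontinua of $D$; since $\Gamma(f(A))\subseteq f(D)$, the set $f^{-1}(\Gamma(f(A)))$ is a subcontinuum containing $A$, whence $\Gamma(A)\subseteq f^{-1}(\Gamma(f(A)))$, i.e.\ $f(\Gamma(A))\subseteq\Gamma(f(A))$; the reverse inclusion is automatic because $f(\Gamma(A))$ is a continuum containing $f(A)$, and by induction $f^n(\Gamma(A))=\Gamma(f^n(A))$. \textbf{(B) Hull stability in the convex metric:} if $C$ is a subcontinuum and $d_H(A,C)<\eps$, then $d_H(\Gamma(A),C)\le\eps$. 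Here $C\subseteq B(A,\eps)\subseteq B(\Gamma(A),\eps)$ gives one inclusion; for the other, in the $\R$-tree $D$ the function $x\mapsto d(x,C)$ is convex along arcs (the nearest-point projection onto the subtree $C$ is $1$-Lipschitz), so its maximum over each arc $[a,b]$ with $a,b\in A$ is attained at an endpoint, giving $d(p,C)\le\max_{a\in A}d(a,C)<\eps$ for every $p\in\Gamma(A)$.

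With these in hand the proof concludes quickly. Given $\eps>0$, let $\delta>0$ be the shadowing constant of $2^f$ for $\eps$, which exists by Theorem \ref{FG} since $f$ has shadowing. A $\delta$-pseudo-orbit $(C_n)_{n\geq 0}$ in $\mathcal{C}(D)$ is in particular a $\delta$-pseudo-orbit in $2^D$, so there is $K\in 2^D$ with $d_H(f^n(K),C_n)<\eps$ for all $n$. Setting $E=\Gamma(K)\in\mathcal{C}(D)$, property (A) gives $C(f)^n(E)=f^n(\Gamma(K))=\Gamma(f^n(K))$, and property (B) applied to $A=f^n(K)$ and $C=C_n$ yields $d_H(C(f)^n(E),C_n)\le\eps$; thus $E$ $\eps$-shadows $(C_n)$ and $C(f)$ has the shadowing property. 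The step I expect to be most delicate is property (B): it is \emph{false} for a general metric on a dendrite --- for instance, on an arc embedded in the plane with a long ``bulge'' the two endpoints can be Euclidean-close while the arc between them, which is exactly their hull, is large --- so the passage to a convex metric is essential, and it is precisely there that the tree geometry (convexity of the distance-to-a-subcontinuum function) is used. Monotonicity enters only through the commutation property (A), which fails for general maps because then $f^n(\Gamma(K))$ may strictly contain $\Gamma(f^n(K))$ and drift away from $C_n$.
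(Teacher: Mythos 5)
Your proof is correct, and it takes a genuinely different route from the paper's. Both arguments handle the easy direction routinely (the paper cites Fernandez--Good; your singleton argument is fine), and both begin the hard direction identically: Theorem~\ref{FG} produces a compact set that shadows the pseudo-orbit of continua inside $2^D$. The divergence is in how that compact shadower is promoted to a continuum. The paper stays in the given metric: it covers $D$ by finitely many connected open sets of diameter less than $\eps/4$, thickens each $K_n$ to a continuum $L_n$ (the closure of the union of cover elements meeting $K_n$), observes that $L_n$ contains both $K_n$ and the $n$-th image of the compact shadower with $d_H(L_n,K_n)<\eps/2$, and then, using monotonicity together with Theorem~\ref{thm:dendritecharacterization}, takes the shadowing continuum to be $\bigcap_{n}f^{-n}(L_n)$. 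You instead pass to a convex metric (Bing--Moise), form the continuum hull $\Gamma(K)$ of the compact shadower $K$, and prove the two facts that make this work: monotonicity gives $f^n(\Gamma(K))=\Gamma(f^n(K))$, and the $\R$-tree geometry gives that taking hulls does not increase Hausdorff distance to a subcontinuum. Your version is conceptually tidy---it isolates exactly where monotonicity and the dendrite structure enter, produces an explicit shadowing continuum, and your ``bulge'' example correctly explains why the change of metric is unavoidable---but it leans on heavier outside machinery (Bing's convexification theorem, plus CAT(0) convexity), whereas the paper's thickening argument is self-contained, using only local connectedness and the intersection characterization of dendrites. Two small points to tighten: (i) the identity $\Gamma(A)=\bigcup_{a,b\in A}[a,b]$ requires that this union be compact for compact $A$ (e.g.\ via continuity of the arc map on dendrites, or by working with its closure, which costs nothing in property (B)); (ii) $1$-Lipschitzness of the nearest-point projection does not by itself yield convexity of $x\mapsto d(x,C)$ along arcs---for that, either invoke the standard fact that in a CAT(0) space (hence in an $\R$-tree) the distance to a closed convex set is convex along geodesics, noting that closed connected subsets of $\R$-trees are convex, or give the elementary tripod/gate computation directly.
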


\begin{proof}
That the shadowing property for $C(f)$ implies the shadowing property for $f$ follows from \cite[Theorem 3.2]{FernandezGood}.

Let us now show the converse.
Let $\varepsilon >0$. By the fact that $D$ is locally connected, we may choose a finite cover $\mathcal{G}$ of $X$ consisting of open connected sets such that each $g \in \mathcal{G}$ has diameter less than $\varepsilon/4$. Let $\varepsilon'\in(0,\frac{\varepsilon}{4})$ be a Lebesgue number associated with the open cover $\mathcal{G}$, i.e., if $A \subseteq X$ has diameter less than $\varepsilon '$, then $A \subseteq g$ for some $g \in \mathcal{G}$. By Theorem~\ref{FG}, there is a $\delta >0$ which witnesses the fact that $2^f$ has the shadowing property with respect to $\varepsilon '$. 
We will show that the same $\delta$ witnesses that $C(f)$ has the shadowing property with respect to $\varepsilon$. To this end, let $(K_n)_{n \in \N}$ be a $\delta$-pseudo-orbit of $C(f)$ in $C(X)$. As $C(X) \subseteq 2^X$, we have that $(K_n)_{n \in \N}$ is a pseudo-orbit in $2^X$. By our choice of $\delta$, we have that there is a compact set $A \subseteq X$ such that 
$$d_H(f^n(A), K_n) < \varepsilon' \,\,\,\,\,\, \text{for every} \,\,\,\,\,\,n \in \N.$$ Now, for each $n\in\N$, let  \[ \mathcal{G}_n = \{g \in \mathcal{G}; g \cap K_n \neq \emptyset \}.   \]
We next observe that $\mathcal{G}_n$ covers $f^n(A)$. Indeed, let $x \in f^n(A)$. As $d_H(f^n(A), K_n) < \varepsilon ',$ there is an open ball $U$ with center $x$ with diameter less than $\varepsilon'$ which intersects $K_n$. As $\varepsilon'$ is a Lebesgue number of $\mathcal{G}$, there is $g \in \mathcal{G}$ such that $U \subseteq g$. As $g \cap K_n \neq \emptyset$, we have that $g \in \mathcal{G}_n$, implying that $\mathcal{G}_n$ covers $f^n(A)$. Now let
\[L_n = \overline{\bigcup_{g\in\mathcal{G}_n}g}.\]
We note that $L_n$ is a continuum as $K_n$ is a continuum and each $g \in \mathcal{G}_n$ is connected. Moreover, we have that
    \[f^n(A) \cup K_n \subseteq L_n \subseteq \bigcup_{x \in K_n} B(x, \varepsilon/4),
    \]
    implying that
\begin{eqnarray*}
d_H(f^n(A), L_n ) &\le& d_H(f^n(A), K_n ) + d_H(K_n, L_n )\\
&<& \varepsilon' + \frac{\varepsilon}{4}\\
&<& \frac{\varepsilon}{2}.
\end{eqnarray*}
   As $f$ is monotone, we have that $f^{-n}(L_n)$ is a continuum containing $A$. By Theorem~\ref{thm:dendritecharacterization}, we have that 
    \[K = \bigcap _{n \in \N} f^{-n}(L_n)\]
    is a continuum containing $A$. Now, as
    $$f^n(A) \subseteq f^n(K) \subseteq L_n \,\,\,\,\,\, \text{and} \,\,\,\,\,\, d_H(f^n(A), L_n ) < \frac{\varepsilon}{2},$$ we have that $d_H(f^n(K),f^n(A)) < \frac{\varepsilon}{2}$. Hence,
\begin{eqnarray*}
d_H(f^n(K), K_n) &\le& d_H(f^n(K), f^n(A)) + d_H(f^n(A), K_n) )\\ 
&<&\frac{\varepsilon}{2} + \varepsilon'\\
&<& \varepsilon,
\end{eqnarray*}
verifying that $K$ is a continuum which $\varepsilon$-shadows the pseudo-orbit $(K_n)_{n\in\N}$.
\end{proof}

We now explore which dendrites admit homeomorphisms with the shadowing property. 
The simplest possible dendrite is the interval $[0,1]$, so the above result ensures that homeomorphisms of $[0,1]$ with shadowing are also examples where $C(f)$ have shadowing. In what follows, let $\mathcal{H}([0,1])$ be the set of homeomorphisms $f\colon[0,1]\to[0,1]$ and $\mathcal{H^+}([0,1]), \mathcal{H^-}([0,1])$ denote the groups of homeomorphisms of $[0,1]$ which are increasing and decreasing, respectively. Using \cite{ArtigueCousillas2019}, we have the following corollary. 

\begin{corollary}\label{cor:typicalhomeoshadowing} A typical $h \in \mathcal{H}([0,1])$ has the property that $C(h)$ has the shadowing property.
\end{corollary}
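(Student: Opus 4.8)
The plan is to reduce the statement about the hyperspace map $C(h)$ to a statement about $h$ itself, and then invoke the known genericity of shadowing for interval homeomorphisms. The crucial observation is that $[0,1]$ is a dendrite (indeed the simplest one) and that every $h \in \mathcal{H}([0,1])$ is a monotone surjection: being a homeomorphism, $h^{-1}(p)$ is a single point for each $p$, hence connected. Consequently Theorem~\ref{thm:dendrite} applies to every $h \in \mathcal{H}([0,1])$ and yields, for each such $h$, that $C(h)$ has the shadowing property if and only if $h$ does. Therefore the two sets
\[
\{h \in \mathcal{H}([0,1]) : C(h) \text{ has shadowing}\} \quad\text{and}\quad \{h \in \mathcal{H}([0,1]) : h \text{ has shadowing}\}
\]
coincide, and it suffices to prove that the latter is comeager (i.e.\ typical) in $\mathcal{H}([0,1])$.

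First I would fix the topological framework: $\mathcal{H}([0,1])$, equipped with a complete metric (for instance the uniform metric applied to both $h$ and $h^{-1}$), is a Polish, hence Baire, space, so that \emph{typical} coincides with \emph{comeager}. Next I would record that $\mathcal{H}([0,1]) = \mathcal{H}^{+}([0,1]) \sqcup \mathcal{H}^{-}([0,1])$ is a partition into two clopen pieces, and that a set is comeager in $\mathcal{H}([0,1])$ exactly when its trace on each of these two pieces is comeager there. Thus it is enough to establish genericity of shadowing separately within $\mathcal{H}^{+}([0,1])$ and within $\mathcal{H}^{-}([0,1])$.

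For the orientation-preserving class I would simply cite \cite{ArtigueCousillas2019}, which shows that a typical increasing homeomorphism of $[0,1]$ has the shadowing property. For the orientation-reversing class I would exploit that each $h \in \mathcal{H}^{-}([0,1])$ has a unique fixed point $p = p(h) \in (0,1)$ and interchanges the subintervals $[0,p]$ and $[p,1]$, so that $h^{2}$ preserves each of them and its two restrictions are conjugate through $h$. Combining the standard equivalence that $h$ has shadowing if and only if $h^{2}$ does (valid for homeomorphisms of a compact space) with a gluing argument at the fixed point $p$, one reduces shadowing of $h$ to shadowing of the increasing map $h^{2}|_{[0,p]}$, and hence, after rescaling to $[0,1]$, to the orientation-preserving case. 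The main obstacle I anticipate is precisely this transfer: the assignment $h \mapsto h^{2}|_{[0,p(h)]}$ must be shown to be category-preserving so that genericity in $\mathcal{H}^{+}([0,1])$ pulls back to genericity in $\mathcal{H}^{-}([0,1])$. By contrast, the equivalence between shadowing of $C(h)$ and of $h$, which is the conceptual content of the corollary, is immediate from Theorem~\ref{thm:dendrite}. If \cite{ArtigueCousillas2019} already states typicality in both orientation classes, or in the full group $\mathcal{H}([0,1])$, then this last step is subsumed and the corollary follows at once.
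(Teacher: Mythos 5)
Your overall architecture coincides with the paper's up to the last step: you reduce via Theorem~\ref{thm:dendrite} (every homeomorphism of $[0,1]$ is monotone and $[0,1]$ is a dendrite, so $C(h)$ has shadowing exactly when $h$ does), you split $\mathcal{H}([0,1])$ into the two clopen classes $\mathcal{H}^+([0,1])$ and $\mathcal{H}^-([0,1])$, and you quote \cite{ArtigueCousillas2019} for the increasing class. All of that is correct and is exactly what the paper does.

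The genuine gap is in the orientation-reversing class, and it cannot be waved away: $\mathcal{H}^-([0,1])$ is clopen and nonmeager, so typicality in $\mathcal{H}([0,1])$ really does require comeagerness of shadowing inside $\mathcal{H}^-([0,1])$, as you yourself note. Your proposed route --- pass to $h^2$, restrict to $[0,p(h)]$, rescale, and pull genericity back from $\mathcal{H}^+([0,1])$ --- is left unfinished at precisely its two load-bearing points. First, the ``gluing argument'': shadowing of the restrictions $h^2|_{[0,p]}$ and $h^2|_{[p,1]}$ does not formally imply shadowing of $h^2$ on $[0,1]$, since pseudo-orbits may hop back and forth across the common fixed point $p$; this step can be rescued using the Pennings--Van Eeuwen characterization \cite{PenningsVaneeuwen1990} (nowhere density of the fixed-point sets of the two restrictions gives nowhere density of the fixed-point set of $h^2$), but you do not carry it out. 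Second, and more seriously, the category-preservation of the transfer map $h \mapsto h^2|_{[0,p(h)]}$ is exactly the point you flag as ``the main obstacle'' and leave unresolved; the hedge at the end does not apply, since \cite{ArtigueCousillas2019} is stated for $\mathcal{H}^+([0,1])$ only. (This step is in fact provable: the transfer map is a continuous surjection onto $\mathcal{H}^+([0,1])$ admitting local continuous sections --- modify $h$ only on $[p,1]$ to realize any nearby increasing target --- hence it is open, and preimages of comeager sets under continuous open surjections are comeager; but none of this appears in your write-up.) The paper avoids this machinery entirely: it observes that the construction of \cite{ArtigueCousillas2019} is symmetric, so one can also produce a \emph{decreasing} homeomorphism with the shadowing property whose conjugacy class is comeager in $\mathcal{H}^-([0,1])$, and since shadowing is a conjugacy invariant this settles the decreasing class in one stroke. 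In short: your reduction via Theorem~\ref{thm:dendrite} and your treatment of $\mathcal{H}^+([0,1])$ are fine, but as written the $\mathcal{H}^-([0,1])$ half of the argument is a plan, not a proof.
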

\begin{proof} 
It was shown  \cite{ArtigueCousillas2019} that there is $h \in \mathcal{H^+}([0,1])$ whose conjugacy class is comeager and $h$ has the shadowing property. Using a symmetric argument, one can show there is a corresponding such map for $\mathcal{H^-}([0,1])$. Hence, we have that a typical $h  \in \mathcal{H}([0,1])$ has the shadowing property. Now as $[0,1]$ is a dendrite, Theorem~\ref{thm:dendrite} ensures that $C(h)$ has the shadowing property for a typical $h \in \mathcal{H} ([0,1])$.
\end{proof}
Given a dendrite $D$, it is well-known that a generic continuous self-map of $D$ has the shadowing property (see \cite{BrianMeddaughRaines, Koscielniaketal, Meddaugh}). Unfortunately, a generic map of the dendrite usually fails to be montone. The hypothesis of $f$ being monotone is important in the proof of Theorem \ref{thm:dendrite} and there are dendrites whose only homeomorphism is the identity map. Such dendrites can easily be  constructed by making branch points dense in the dendrite and the order of each branch point is distinct. In this article, we prove that a large class of dendrites admit homeomorphisms with the shadowing property. This will follow from a series of propositions we prove below. These propositions will give general methods for constructing dendrites which admit homeomorphisms with the shadowing property from simpler dendrites with such properties. Before we state our results, we first need the following definition. We use $\mathcal{H}(X)$ to denote the group of homeomorphisms of $X$.

\begin{definition}[Quasi-attractors]
Suppose that $X$ is a compact metric space and take $f\in\mathcal{H}(X)$. 
A compact $f$-invariant subset $A\subset X$ is a \emph{quasi-attractor} if for every 
open neighborhood $U$ of $A$ 
there is an open subset $V\subset U$ such that $A\subset V$ and 
${\overline{f(V)}}\subset V$. 
If, in addition, $f\colon A\to A$ has the shadowing property we say that $A$ is a \emph{quasi-attractor with the shadowing property}. We say that $A$ is a \emph{quasi-repeller} if it is a quasi-attractor for the inverse map $f^{-1}$.
\end{definition}

The following proposition \cite{ARTIGUE2024512} will be useful for us. 
\begin{proposition}\label{artigueprop3} \cite[Prop 3]{ARTIGUE2024512}
 If every point of $X$ belongs to a quasi-attractor with the shadowing property, then $f$ has the shadowing property.
\end{proposition}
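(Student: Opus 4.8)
The plan is to prove positive (one-sided) shadowing, which the paper already notes is equivalent to shadowing on a compact space, and to exploit the two structural features built into the definition of a quasi-attractor: the \emph{forward invariance} $\overline{f(V)}\subset V$ of a trapping region traps any pseudo-orbit with sufficiently small jumps, while the \emph{neighborhood-basis} clause lets me take trapping regions arbitrarily close to $A$. Concretely, fix $\varepsilon>0$. For a point $x\in X$ let $A_x$ be a quasi-attractor with the shadowing property containing $x$, let $\gamma_x>0$ be a shadowing constant of $f|_{A_x}$ for error $\varepsilon/2$, and choose $\eta_x\in(0,\varepsilon/2)$ small enough that $\eta_x<\gamma_x/4$ and $d(u,v)\le\eta_x$ forces $d(f(u),f(v))<\gamma_x/2$ (uniform continuity of $f$). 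Since $B(A_x,\eta_x)$ is an open neighborhood of $A_x$, the quasi-attractor definition yields a trapping region $V_x\subset B(A_x,\eta_x)$ with $\overline{f(V_x)}\subset V_x$, and $x\in A_x\subset V_x$. As $\{V_x\}_{x\in X}$ is an open cover, compactness extracts a finite subcover $V_1,\dots,V_m$ with associated data $A_j,\gamma_j,\eta_j$.

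Next I would set $\delta=\min_j\min\bigl(\dista(\overline{f(V_j)},X\setminus V_j),\,\gamma_j/4\bigr)$, which is positive since each $\overline{f(V_j)}$ is a compact subset of the open set $V_j$. Given a $\delta$-pseudo-orbit $(x_n)_{n\in\N}$, pick $j$ with $x_0\in V_j$. I claim the whole forward pseudo-orbit stays in $V_j$: if $x_n\in V_j$ then $f(x_n)\in\overline{f(V_j)}$ and $d(f(x_n),x_{n+1})<\delta\le\dista(\overline{f(V_j)},X\setminus V_j)$, so $x_{n+1}\in V_j$, and the claim follows by induction. Hence $x_n\in B(A_j,\eta_j)$ for all $n$. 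Choosing $a_n\in A_j$ with $d(x_n,a_n)\le\eta_j$, the triangle inequality together with $d(f(a_n),f(x_n))<\gamma_j/2$, $d(f(x_n),x_{n+1})<\gamma_j/4$ and $d(x_{n+1},a_{n+1})\le\eta_j<\gamma_j/4$ shows $(a_n)$ is a $\gamma_j$-pseudo-orbit of $f|_{A_j}$; it is genuine because $A_j$ is invariant. Shadowing on $A_j$ then gives $y\in A_j$ with $d(f^n(y),a_n)<\varepsilon/2$, and since $A_j$ is invariant this $f^n(y)$ is a true orbit in $X$, with $d(f^n(y),x_n)\le\varepsilon/2+\eta_j<\varepsilon$, so $y$ $\varepsilon$-shadows $(x_n)$.

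The step I expect to be the main obstacle is the passage from a pseudo-orbit confined to $V$ to a pseudo-orbit on $A$ to which the shadowing of $f|_A$ applies. Projecting $x_n$ to a nearest point $a_n\in A$ inflates the jump sizes by roughly the diameter of the modulus of continuity of $f$ at scale $\eta$, so one cannot simply take $V$ inside the $\varepsilon$-neighborhood of $A$ and push through. The resolution is precisely to shrink $\eta$ below both the shadowing constant $\gamma$ of $f|_A$ and the relevant modulus of continuity, which is why the neighborhood-basis clause of the definition (not merely the existence of a single trapping region) is indispensable. The remaining points to verify carefully are that $\delta$ is a genuine uniform constant (a finite minimum over the subcover) and that the shadowing orbit, living in the invariant set $A_j$, is an honest orbit of $f$ on $X$; both are routine once the choices above are fixed.
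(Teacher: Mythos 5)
Your proof is correct, and there is nothing in the paper to compare it against: the paper does not prove this proposition but simply quotes it, with citation, as Proposition 3 of the reference \cite{ARTIGUE2024512}. Your argument is the natural one for this statement --- use the neighborhood-basis clause to get trapping regions $V_x\subset B(A_x,\eta_x)$ with $\eta_x$ tied to the shadowing constant of $f|_{A_x}$, trap the pseudo-orbit in one $V_j$ by taking $\delta$ below $\dista(\ov{f(V_j)},X\setminus V_j)$, project to a pseudo-orbit in $A_j$, and shadow there --- and your closing diagnosis is exactly right: a single trapping region would not suffice, since $\eta$ must be driven below both the shadowing constant of $f|_{A}$ and the modulus of continuity of $f$. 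Two routine points to tidy up: (i) if some $V_j=X$ then $\dista(\ov{f(V_j)},X\setminus V_j)$ is a distance to the empty set, so either adopt the convention that it is $+\infty$ or note that the trapping claim is vacuous in that case; (ii) your reduction to positive shadowing, and your use of a forward-shadowing point in $A_j$, both rest on the classical equivalence of positive and two-sided shadowing for homeomorphisms of compact spaces, which the paper itself records with a reference to Pilyugin's book, so this is legitimate but worth stating explicitly as the final step.
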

The following proposition is a rather well-known fact. It also follows from the characterization of homeomorphisms of the interval with the shadowing property given in \cite{PenningsVaneeuwen1990}.
\begin{proposition}\label{basicinterval}
Consider $h:[0,1] \rightarrow [0,1]$ defined by $h(x) = x^2$. Then, $h$ has the shadowing property.
\end{proposition}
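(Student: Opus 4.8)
The plan is to analyze the dynamics of $h(x) = x^2$ on $[0,1]$ directly, which is particularly simple: $h$ has exactly two fixed points, a sink at $0$ and a source at $1$. For every $x \in (0,1)$ we have $h^n(x) = x^{2^n} \to 0$ monotonically, and $h^{-n}(x) = x^{1/2^n} \to 1$, so the entire open interval flows from the source to the sink without any intermediate recurrence. This is exactly the North-pole/South-pole picture on an interval, and such gradient-like dynamics is the prototypical setting where shadowing holds.

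First I would verify the shadowing property by a direct pseudo-orbit tracing argument. Given $\varepsilon > 0$, I would choose $\delta$ and exploit the structure that any $\delta$-pseudo-orbit must spend only boundedly many steps in the "middle" region $[\varepsilon, 1-\varepsilon]$ before being pulled toward $0$ (or, reading backward, toward $1$). The key quantitative fact is that on any compact subinterval $[a,b] \subset (0,1)$ the map $h$ is uniformly contracting toward $0$ in the forward direction in the sense that $h$ strictly decreases points and $h(x) < x$ with a uniform gap $x - h(x) = x(1-x)$ bounded below on $[a,b]$; symmetrically $h^{-1}$ pushes points uniformly toward $1$. One then tracks the pseudo-orbit: once it enters a small neighborhood of the sink $0$, forward iteration keeps it there and a genuine orbit started near the pseudo-orbit stays $\varepsilon$-close; the finitely many excursion steps in the middle can be shadowed because $h$ is a homeomorphism and hence uniformly continuous, so small enough $\delta$ keeps the true orbit within $\varepsilon$ over a bounded number of steps.

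Alternatively, and more cleanly, I would invoke the machinery already set up in the excerpt via Proposition \ref{artigueprop3}. The plan here is to exhibit, for each point of $[0,1]$, a quasi-attractor with the shadowing property containing it. The fixed point $\{0\}$ is a quasi-attractor with the shadowing property trivially (a single fixed point shadows everything). The real task is to handle points in $(0,1]$: for these I would use that $\{0\}$ together with the whole interval $[0,1]$ serves as a quasi-attractor, or invoke the characterization in \cite{PenningsVaneeuwen1990} directly. Since every point's forward orbit converges to $0$ and the only other fixed point $1$ is a source, one checks the quasi-attractor neighborhood condition $\overline{h(V)} \subset V$ is satisfied by intervals of the form $V = [0, a)$ with $a < 1$, because $h([0,a]) = [0, a^2] \subsetneq [0,a)$.

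The main obstacle, and really the only subtle point, is that $1$ is a source rather than a sink, so $\{0\}$ alone is not a quasi-attractor that covers the point $1$; I must be careful that the covering-by-quasi-attractors hypothesis of Proposition \ref{artigueprop3} genuinely applies at the repelling endpoint. The cleanest resolution is to simply cite the explicit characterization of interval homeomorphisms with shadowing from \cite{PenningsVaneeuwen1990}: their criterion is that shadowing holds precisely when the fixed-point and periodic structure is "hyperbolic-like" in the appropriate combinatorial sense, and $h(x)=x^2$ plainly satisfies it having only the two hyperbolic fixed points $0$ and $1$ with no other periodic or recurrent behavior. I expect the bulk of a careful write-up to be the routine $\varepsilon$–$\delta$ bookkeeping of the direct argument, which is why deferring to the established characterization is the most economical route.
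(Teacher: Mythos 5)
Your final route---citing the characterization of interval homeomorphisms with the shadowing property from \cite{PenningsVaneeuwen1990}---is exactly what the paper does: it gives no independent argument, remarking only that the proposition is well known and follows from that characterization. Your direct pseudo-orbit tracing sketch is also sound (it is essentially the argument the paper later writes out in full as Proposition~\ref{simpleimplyshadowing} for simple homeomorphisms), and you correctly identify that the quasi-attractor route via Proposition~\ref{artigueprop3} genuinely fails at the repelling endpoint $1$, which is why deferring to the cited characterization is the right call.
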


Next we define star union of countably many metric spaces with exactly one point in common.

\begin{definition}[Star union of metric spaces]
Let $((X_n,d_n))_{n\in\N}$ be a sequence of compact metric spaces and $p \in X_1$ be such that $X_n \cap X_m =\{p\}$ for all $n \neq m$. We define $\star(X_n,p)$ as the metric space $X = \cup_{n=1}^{\infty } X_n$ with the distance $d$ defined as follows:
\[d(a,b) = 
\begin{cases}
    d_n (a,b) & \textit { if } a,b \in X_n\\
    d_n(a,p)+d_m(b,p) & \textit { if } a \in X_n, \  b \in X_m, \ \textit{ and } n \neq m.
\end{cases}
\]
\end{definition}
The above definition is a small modification of a well-known metric space known as Kowalsky Hedgehog space \cite{SwardsonHedgehog}. In the standard hedgehog space $X_n$'s are taken to be the interval $[0,1]$.

Note that if $\diam(X_n) \rightarrow 0$ as $n \rightarrow \infty$, then $X$ is a compact metric space. In particular, we define the \emph{ $n$-star $\stn$} as  the set in the plane $\R^2$ consisting of $n$ intervals emanating from the origin  and \emph{ $\omega$-star $\stomega$} as the union of countably infinite number of intervals emanating from the origin whose diameters go to zero. More precisely, $\stomega$ can be taken as  $*(X_n,0)$ where $0$ is the origin in the plane and  $X_n =\{(t, \frac{1}{n} \cdot t):0 \le t \le \frac{1}{n}\}$.

\begin{proposition}\label{prop:starshadowing}
Suppose $\star(X_n,p)$ is as above with $\diam (X_n) \rightarrow 0$ and assume that $h_n:X_n \rightarrow X_n$ is a homeomorphism with the shadowing property such that  $\{p\}$ is a quasi-attractor fixed point of $h_n$ for every $n \in \N$. Then, $h:\star(X_n,p)\to \star(X_n,p)$ defined by $h(x) = h_n(x)$ when $x \in X_n$, is a homeomorphism with the shadowing property and $\{p\}$ is a quasi-attractor fixed point of $h$. 
\end{proposition}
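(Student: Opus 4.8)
The plan is to realize $h$ as a homeomorphism and then invoke Proposition~\ref{artigueprop3}, for which I would exhibit, for every point of $\star(X_n,p)$, a quasi-attractor with the shadowing property containing it. Write $X=\star(X_n,p)$. Since $\{p\}$ is a quasi-attractor fixed point of each $h_n$, every $h_n$ fixes $p$, so the formula $h(x)=h_n(x)$ for $x\in X_n$ is unambiguous on the overlaps $X_n\cap X_m=\{p\}$ and defines a bijection whose restriction to each $X_n$ is $h_n$. The only place where continuity of $h$ and $h^{-1}$ is not inherited locally from the $h_n$ is at $p$, and there it follows from $\diam(X_n)\to 0$: given $\varepsilon>0$, only finitely many rays have diameter $\ge\varepsilon$; on those finitely many I use continuity of the corresponding $h_n$ at $p$, while every other ray is mapped into itself and hence stays within $\varepsilon$ of $p$. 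This shows $h\in\mathcal{H}(X)$ and $h(p)=p$.

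I would next record two elementary features of the metric $d$ on $X$: for $x\in X_n\setminus\{p\}$ a sufficiently small ball around $x$ stays inside $X_n$, so each $X_n\setminus\{p\}$ is open in $X$ and carries its intrinsic topology; and $B(p,r)=\bigcup_n\bigl(B(p,r)\cap X_n\bigr)$, so a set containing a neighbourhood of $p$ in each ray is itself a neighbourhood of $p$ in $X$ as soon as the radii are bounded below. Note also that each $X_n$ is $h$-invariant with $h|_{X_n}=h_n$, so $h|_{X_n}$ has the shadowing property.

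The heart of the argument is to show that each $X_n$ is a quasi-attractor of $h$ (it is then one with the shadowing property). Given an open $U\supset X_n$, pick $r>0$ with $B(p,r)\subset U$; because $\diam(X_m)\to 0$, all but finitely many rays satisfy $X_m\subset B(p,r)\subset U$, and for those I set $V_m=X_m$, which is forward invariant since $h(X_m)=X_m$. For each of the remaining finitely many $m\neq n$ I use that $\{p\}$ is a quasi-attractor fixed point of $h_m$ to choose an open $V_m\subset U\cap X_m$ with $p\in V_m$ and $\overline{h_m(V_m)}\subset V_m$. Setting $V=X_n\cup\bigcup_{m\neq n}V_m$, the two metric features give that $V$ is open (it contains a ball $B(p,r')$ since each $V_m$ contains a fixed-radius ball about $p$, and it is open away from $p$ because the pieces $X_n\setminus\{p\}$ and $V_m\setminus\{p\}$ are), that $X_n\subset V\subset U$, and that $\overline{h(V)}\subset V$: a limit point $z$ of $h(V)=X_n\cup\bigcup_{m\neq n}h_m(V_m)$ is either $p\in V$ or lies in some $X_\ell\setminus\{p\}$, where locally $h(V)$ coincides with $X_n$ (if $\ell=n$) or with $h_m(V_m)$ (if $\ell=m$), forcing $z\in X_n\subset V$ or $z\in\overline{h_m(V_m)}\subset V_m\subset V$. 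The identical construction with $X_n$ deleted from $V$ (choosing $V_n$ from the quasi-attractor property of $h_n$ as well) shows that $\{p\}$ is a quasi-attractor fixed point of $h$.

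Since $X=\bigcup_n X_n$ and each $X_n$ is a quasi-attractor with the shadowing property, Proposition~\ref{artigueprop3} yields that $h$ has the shadowing property, completing the proof. The main obstacle is the verification of $\overline{h(V)}\subset V$ in the presence of infinitely many rays accumulating at $p$; what makes it routine is that $\diam(X_n)\to 0$ confines the genuine use of the quasi-attractor property of $p$ to finitely many rays, while all the far rays are automatically contained in $U$ and invariant.
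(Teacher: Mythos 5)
Your proof is correct and follows essentially the same route as the paper's: showing that each ray $X_n$ is a quasi-attractor of $h$ with the shadowing property and that $\{p\}$ is a quasi-attractor of $h$, then invoking Proposition~\ref{artigueprop3}. The paper's proof simply asserts these facts from the hypotheses that $\{p\}$ is a quasi-attractor of each $h_n$ and $\diam(X_n)\to 0$, whereas you supply the explicit verification (construction of the forward-invariant open sets $V$ and the continuity of $h$ at $p$), which is a welcome elaboration rather than a different method.
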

\begin{proof}
As $p$ is a quasi-attractor of $h_n$ in $X_n$ for every $n \in \N$ and the $\diam (X_n) \rightarrow 0$, we have that $X_n \subseteq \star(X_n,p)$ is a quasi-attractor of  $h$. Thus, every point of $\star(X_n,p)$ is contained in a quasi-attractor with the shadowing property, so by Proposition~\ref{artigueprop3}, we have that $h$ has the shadowing property. That $p$ is a quasi-attractor of $\star(X_n,p)$ follows from the fact that $p$ is a quasi-attractor of each $X_n$ and the diameters of $X_n$ goes to zero as $n$ goes to infinity. 
\end{proof}
\begin{corollary}\label{cor:nstarshadowing}
For each $n\in\N\cup\{\omega\}$, the n-star $\stn$  admits homeomorphisms with the shadowing property and having the origin as a quasi-attracting fixed point.
\end{corollary}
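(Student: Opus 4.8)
The plan is to place on each arm of the star a copy of the model map $h_0(x)=x^2$ from Proposition~\ref{basicinterval} and to glue these copies together by means of Proposition~\ref{prop:starshadowing}. Each of the spaces $\stn$ (for $n\in\N$) and $\stomega$ is a star union $\star(X_i,p)$, where $p$ is the origin and every arm $X_i$ is an interval emanating from $p$. For each $i$ fix a homeomorphism $\phi_i\colon X_i\to[0,1]$ with $\phi_i(p)=0$ and set $h_i=\phi_i^{-1}\circ h_0\circ\phi_i\colon X_i\to X_i$, the copy of $x\mapsto x^2$ transported to $X_i$; it is a homeomorphism of $X_i$ fixing $p$. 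Let $h$ be the self-map of $\star(X_i,p)$ that agrees with $h_i$ on each arm $X_i$; it is well defined and a homeomorphism because every $h_i$ fixes the common point $p$.

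It remains to verify the two hypotheses of Proposition~\ref{prop:starshadowing} for each arm map $h_i$. First, $h_i$ has the shadowing property: by Proposition~\ref{basicinterval} the map $h_0$ has the shadowing property, $h_i$ is topologically conjugate to $h_0$ via $\phi_i$, and shadowing is invariant under topological conjugacy on compact metric spaces (see \cite{PilyuginShadowingBook1999}), so $h_i$ inherits it. Second, $\{p\}$ is a quasi-attractor fixed point of $h_i$. For $h_0$ one has $h_0(x)<x$ for every $x\in(0,1)$; thus, given a neighborhood of $0$ containing $[0,a)$, the set $V=[0,b)$ with $0<b<\min\{a,1\}$ satisfies $\overline{h_0(V)}=[0,b^2]\subset[0,b)=V$, so $\{0\}$ is a quasi-attractor fixed point of $h_0$. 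Being a quasi-attractor is a purely topological property, expressed only through open sets, closures and invariance, hence it is preserved by the conjugacy $\phi_i$, and $\{p\}$ is a quasi-attractor fixed point of $h_i$.

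Finally I treat the diameter condition and the two cases. For $n=\omega$ the definition of $\stomega=\star(X_i,0)$ gives $\diam(X_i)\to0$, so Proposition~\ref{prop:starshadowing} applies verbatim and produces a homeomorphism $h$ of $\stomega$ with the shadowing property having $p$ as a quasi-attractor fixed point. For finite $n$ the space $\stn$ has only finitely many arms; to match the exact form of Proposition~\ref{prop:starshadowing} one pads the family by setting $X_i=\{p\}$ and $h_i=\mathrm{id}$ for $i>n$, which does not change the space, makes $\diam(X_i)\to0$ trivially, and preserves all hypotheses. In both cases the conclusion follows. The only points that require attention are the conjugacy invariance of the shadowing property and of the quasi-attractor property, which is what allows the single model map $h_0$ to be reused on every arm, and, in the finite case, the innocuous padding that casts $\stn$ in the form demanded by Proposition~\ref{prop:starshadowing}; neither is a genuine obstacle, so the corollary is essentially a direct application of Propositions~\ref{basicinterval} and~\ref{prop:starshadowing}.
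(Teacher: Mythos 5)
Your proof is correct and takes essentially the same route as the paper, whose entire proof of Corollary~\ref{cor:nstarshadowing} is the one-line observation that it follows from Propositions~\ref{prop:starshadowing} and~\ref{basicinterval}. The details you supply---transporting $x\mapsto x^2$ to each arm by conjugacy, the conjugacy invariance of shadowing and of the quasi-attractor property, and the singleton padding for finite $n$---are exactly the routine verifications the paper leaves implicit.
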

\begin{proof}
    This simply follows from Propositions~\ref{prop:starshadowing} and \ref{basicinterval}.
\end{proof}

\begin{definition}[Bridge space]
For $i =1, 2$, let $(X_i,d_i)$ be disjoint metric spaces and $p_i \in X_i$. Then, $X_1 \leftrightsquigarrow X_2$ is the bridge space of $X_1,X_2$ defined as follows:

\[ X_1 \leftrightsquigarrow X_2 = X_1 \cup [p_1, p_2] \cup X_2,\]
where $[p_1,p_2]$ is homeomorphic to an interval with endpoints $p_1,p_2$ such that $ [p_1, p_2] \cap X_i = \{p_i\} $, $i =1,2$. Without loss of generality, we assume that the metric $d'$ on $[p_1,p_2]$ is that of the interval $[0,1]$. The metric $d$ on $X_1 \leftrightsquigarrow X_2$ is defined as follows
\[d(a,b) = 
\begin{cases}
    d_i (a,b) & \textit { if } a,b \in X_i\\
    d'(a,b) & \textit{ if } a,b \in [p_1,p_2]\\
    d_i(a,p_i)+d'(b,p_i) & \textit { if } a \in X_i, \  b \in [p_1,p_2]\\
    d_1(a,p_1)+d_2(b,p_2) +1  & \textit { if } a \in X_1, \  b \in X_2.
\end{cases}
\]
\end{definition}
It is clear that $d(a,b)\geq0$ and that $d(a,b)=d(b,a)$. The triangle inequality for $d$ is proved as follows: we deal with the case $a\in X_1$, $b\in [p_1,p_2]$, and $c\in X_2$;  other cases are analogous.
\begin{eqnarray*}
d(a,c)&=&d_1(a,p_1)+d_2(c,p_2)+1\\
&=& d_1(a,p_1)+d'(b,p_1)+d'(b,p_2)+d_2(c,p_2)\\
&=& d(a,b)+d(b,c).
\end{eqnarray*}

\begin{proposition}\label{prop:bridge}
Suppose $X_1 \leftrightsquigarrow X_2$ is the bridge space of $X_1,X_2$ as defined above. Furthermore, assume that $\{p_i\}$ is a quasi-attractor of some map $h_i:X_i \rightarrow X_i$ with the shadowing property. Let $h_3:[p_1,p_2] \rightarrow [p_1,p_2]$ be any homeomorphism with the shadowing property such that $h_3(p_i) = p_i$ and $\{p_1,p_2\}$ is a quasi-attractor set of $h_3$. Then, the map $h\colon X_1 \leftrightsquigarrow X_2 \rightarrow X_1 \leftrightsquigarrow X_2$ defined by
  \[h(x) = 
\begin{cases}
    h_i(x) & \textit { if } x \in X_i, \ i=1, 2 \\
    h_3(x) & \textit { if } x \in [p_1,p_2]
\end{cases}
\]  
    has the shadowing property. 
    \end{proposition}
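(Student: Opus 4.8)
The plan is to deduce the shadowing property of $h$ from Proposition~\ref{artigueprop3} by exhibiting three quasi-attractors of $h$ with the shadowing property whose union is all of $X_1 \leftrightsquigarrow X_2$, namely the two ``leaves'' $X_1$, $X_2$ and the ``bridge'' $[p_1,p_2]$. First I would record that $h$ is a well-defined homeomorphism: the defining pieces agree at the junction points, since each $\{p_i\}$ is an invariant singleton and hence a fixed point of $h_i$, giving $h_1(p_1)=p_1=h_3(p_1)$ and $h_2(p_2)=p_2=h_3(p_2)$; the pasting lemma applied to the closed sets $X_1$, $[p_1,p_2]$, $X_2$ then yields continuity of $h$ and of $h^{-1}$. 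Each of the three sets is compact and $h$-invariant, and the restriction of $h$ to it is exactly $h_1$, $h_2$, or $h_3$, which has the shadowing property by hypothesis. Thus it remains only to verify that each of these sets is a quasi-attractor of $h$ in the ambient space $X$.

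To see that the bridge $[p_1,p_2]$ is a quasi-attractor, let $U$ be an open set with $[p_1,p_2]\subseteq U$. Since $X$ is locally an arc along the interior of the bridge, the only content is at the two junction points, where $X_1$ and $X_2$ are attached. Using that $\{p_i\}$ is a quasi-attractor of $h_i$, I would choose open sets $V_i\subseteq U\cap X_i$ with $p_i\in V_i$ and $\overline{h_i(V_i)}\subseteq V_i$, and set $V=[p_1,p_2]\cup V_1\cup V_2$. This $V$ is open in the glued topology (a point $p_i$ is interior because $V$ contains both a bridge-arc at $p_i$ and the $X_i$-neighborhood $V_i$) and satisfies $V\subseteq U$. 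Since $h([p_1,p_2])=h_3([p_1,p_2])=[p_1,p_2]$ and $h(V_i)=h_i(V_i)$, taking closures gives $\overline{h(V)}=[p_1,p_2]\cup\overline{h_1(V_1)}\cup\overline{h_2(V_2)}\subseteq V$, as required.

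For the leaf $X_1$ (and symmetrically $X_2$), let $U$ be open with $X_1\subseteq U$; then $U$ contains an initial sub-arc $[p_1,a)$ of the bridge. Here I would invoke the hypothesis that $\{p_1,p_2\}$ is a quasi-attractor of $h_3$ to obtain a forward-invariant neighborhood $W=[p_1,b)\cup(c,p_2]$ with $b\le a$ and $\overline{h_3(W)}\subseteq W$; because $h_3([p_1,b))$ is connected and contains $h_3(p_1)=p_1$, it cannot meet the component $(c,p_2]$, so $\overline{h_3([p_1,b))}\subseteq[p_1,b)$. Setting $V=X_1\cup[p_1,b)$, which is open, contains $X_1$, and lies inside $U$, and using $h_1(X_1)=X_1$ (a closed set), one gets $\overline{h(V)}=X_1\cup\overline{h_3([p_1,b))}\subseteq X_1\cup[p_1,b)=V$. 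As $X_1\cup[p_1,p_2]\cup X_2=X_1\leftrightsquigarrow X_2$, every point of the bridge space lies in one of the three quasi-attractors with the shadowing property produced above, and Proposition~\ref{artigueprop3} delivers the shadowing property for $h$.

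The only delicate part is the behaviour at the two junction points $p_1,p_2$: one must check that the neighborhoods assembled from the separate pieces are genuinely open in the glued topology and are carried into themselves by $h$. This is precisely where the quasi-attractor hypotheses are used---the property of $\{p_i\}$ for $h_i$ controls the leaf side when testing the bridge, while the property of $\{p_1,p_2\}$ for $h_3$ produces the forward-invariant sub-arc $[p_1,b)$ when testing a leaf, the connectedness of the interval being what forces the image to stay on the correct side. Everything else is a routine verification of the definition of quasi-attractor.
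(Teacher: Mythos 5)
Your proposal is correct and follows exactly the paper's route: the paper's proof simply notes that $X_1$, $X_2$, and $[p_1,p_2]$ are quasi-attractors of $h$ with the shadowing property and invokes Proposition~\ref{artigueprop3}. Your write-up just supplies the verification details (openness of the glued neighborhoods and the connectedness argument at the junction points) that the paper leaves implicit.
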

\begin{proof}
Note that each $X_i$, with $i=1,2$, and $[p_1,p_2]$ are quasi-attractors of $h$ with shadowing. Then the result follows from Proposition \ref{artigueprop3}.  
\end{proof}
\begin{remark}\label{rem:homeoattractringends}
A map $h_3$ with the shadowing property as required in Proposition~\ref{prop:bridge}, can be easily constructed using the characterization of shadowing given in \cite{PenningsVaneeuwen1990}. For example, take $h_3$ conjugate to a homeomoprhism $f$ of $[0,1]$ whose fixed point set is $\{0,1/2,1\}$ such that $f$ is below the identity on $[0,1/2]$ and above the identity on $[1/2,1]$.
\end{remark}
\begin{corollary}\label{cor:bridgeshadowing} Suppose $h_i:X_i \rightarrow X_i$, $1 \le i \le 2$, are homeomorphisms with the shadowing property with $p_i\in X_i$ quasi-attractor fixed point of $h_i$. Then, the bridge space $X_1 \leftrightsquigarrow X_2$ admits a homeomorphism with the shadowing property.  
\end{corollary}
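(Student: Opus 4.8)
The plan is to reduce the statement directly to Proposition~\ref{prop:bridge}, so that the only genuine work is to supply a homeomorphism $h_3$ of the bridge interval $[p_1,p_2]$ meeting the three requirements of that proposition: $h_3$ should have the shadowing property, fix both endpoints $p_1,p_2$, and have $\{p_1,p_2\}$ as a quasi-attractor set. The maps $h_1,h_2$ are handed to us by the hypothesis of the corollary (homeomorphisms with the shadowing property for which $p_i$ is a quasi-attractor fixed point), so once $h_3$ is produced the corollary is immediate.

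First I would construct $h_3$ explicitly, following Remark~\ref{rem:homeoattractringends}. Identifying $[p_1,p_2]$ with $[0,1]$ via $p_1\leftrightarrow 0$, $p_2\leftrightarrow 1$, I would take $h_3$ to be conjugate to a homeomorphism $f$ of $[0,1]$ whose fixed-point set is exactly $\{0,\tfrac12,1\}$, lying strictly below the diagonal on $(0,\tfrac12)$ and strictly above it on $(\tfrac12,1)$. Such an $f$ is an increasing homeomorphism fixing the endpoints, and the sign conditions force every orbit starting in $[0,\tfrac12)$ to converge to $0$ and every orbit in $(\tfrac12,1]$ to converge to $1$. Hence $\{0,1\}$, i.e. $\{p_1,p_2\}$, is an attracting set; verifying that it is a quasi-attractor set amounts to exhibiting, for each neighborhood $U$ of $\{p_1,p_2\}$, a forward-invariant (up to closure) neighborhood inside $U$, which the attraction toward the endpoints provides directly, while $\tfrac12$ is the complementary repelling fixed point.

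Second, that this $h_3$ has the shadowing property I would not reprove but simply quote: it is exactly the sort of interval homeomorphism covered by the characterization in \cite{PenningsVaneeuwen1990}, as already noted in Remark~\ref{rem:homeoattractringends}. With $h_1,h_2$ and $h_3$ in hand I would then invoke Proposition~\ref{prop:bridge}, which glues them into a single self-map $h$ of $X_1\leftrightsquigarrow X_2$ and guarantees that $h$ has the shadowing property. Since $h_1,h_2,h_3$ are all homeomorphisms and all agree with the identity-fixing behavior at the junction points $p_1,p_2$, the glued map $h$ is itself a homeomorphism, which is precisely the conclusion that the bridge space \emph{admits} a homeomorphism with the shadowing property.

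I expect no serious obstacle here, as the corollary is essentially a packaging of Proposition~\ref{prop:bridge} with the existence statement of Remark~\ref{rem:homeoattractringends}. The only point requiring care is checking that the single explicit $h_3$ satisfies all three hypotheses at once: being a homeomorphism fixing $p_1,p_2$ (automatic from the construction), having $\{p_1,p_2\}$ as a quasi-attractor set (the attraction argument above), and having the shadowing property on the interval. The last of these is the most technical, but it is delegated wholesale to \cite{PenningsVaneeuwen1990}, so the proof remains short.
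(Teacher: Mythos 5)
Your proposal is correct and follows exactly the paper's route: the paper's proof is a one-line invocation of Proposition~\ref{prop:bridge} together with Remark~\ref{rem:homeoattractringends}, which is precisely what you do, only with the construction of $h_3$ and the quasi-attractor check spelled out explicitly. No gaps; your extra verification that the glued map is a homeomorphism is a harmless elaboration of what the paper leaves implicit.
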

\begin{proof}
    This simply follows from Proposition~\ref{prop:bridge} and Remark~\ref{rem:homeoattractringends}.
\end{proof}

\begin{definition}[Combs] Let $X$ be a continuum, $D = \{d_1, d_2,\ldots \} \subset X$ be a countable set, and $1 \le n \le \omega$. We define the \emph {$(X,D,n )$-comb} as the following subset of $X \times \R^2 :$ 
\[Z =  \{ (x,0); x \in X \setminus D \} \cup \{(d_i, 1/i\cdot y); d_i \in D, y \in \stn \}.
\]
Above, $0$ denotes the origin in $\R^2$. We endow $X \times \R ^2$ with the product metric given by the maximum between the metric on $X$ and the metric on $\R^2$.
\end{definition}

We note that $Z$ is a closed subset of $X \times \R^2$, and hence a continuum, as $X$ and $\stn$ are continua. One can think of $Z$ as $X$ with a decreasing sequence of stars $\stn$ attached at each point of $D$. We note that there is a natural projection map $\Pi: Z \rightarrow X$ defined by $\Pi (x,y ) = x$.
\begin{definition}\label{simple}
    We call a dynamical system {\em $(X,f)$ simple} if it has a quasi-attractor fixed point $p$ and a quasi-repeller fixed point $q$ such that for all $x \in X \setminus \{p,q\}$ we have that $lim_{n \rightarrow \infty} f^{n}(x) = p$ and $lim_{n \rightarrow \infty} f^{-n}(x) = q$.
\end{definition}

The following proposition is well-known when $(X,f)$ is a simple Morse-Smale diffeomorphism of a manifold \cite{Palis} or an increasing/decreasing continuous function of the interval \cite{PenningsVaneeuwen1990}. We could not find a proof in the general case of homeomorphisms of compact metric spaces, so we include a proof for the sake of completeness.
\begin{proposition}\label{simpleimplyshadowing} If $(X,f)$ is a simple homeomorphism of a compact metric space, then $f$ has the shadowing property. 
\end{proposition}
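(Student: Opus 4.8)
The plan is to prove Proposition~\ref{simpleimplyshadowing} by exhibiting, for each point $x\in X$, a quasi-attractor with the shadowing property containing $x$, so that Proposition~\ref{artigueprop3} applies directly. The natural candidates are the fixed points $p$ and $q$ themselves, viewed as singleton quasi-attractors. Indeed, $\{p\}$ is a quasi-attractor fixed point by hypothesis, and $f\colon\{p\}\to\{p\}$ trivially has the shadowing property (a singleton system is vacuously shadowing), and similarly $\{q\}$ is a quasi-attractor for $f^{-1}$, i.e.\ a quasi-repeller. So the two fixed points are covered immediately. The entire difficulty is therefore concentrated in the generic points $x\in X\setminus\{p,q\}$, which are attracted to $p$ in forward time and to $q$ in backward time but need not lie inside any proper invariant quasi-attractor.

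\medskip
First I would reduce the shadowing statement to a direct pseudo-orbit construction rather than trying to fit every point into a single small quasi-attractor, since the dynamics funnels everything between $q$ (repeller) and $p$ (attractor). Given $\eps>0$, I would use the quasi-attractor property of $p$ to choose a nested neighborhood $V_p$ with $\overline{f(V_p)}\subset V_p$ of diameter less than $\eps$, and symmetrically a neighborhood $V_q$ with $\overline{f^{-1}(V_q)}\subset V_q$ of diameter less than $\eps$ using that $q$ is a quasi-repeller. The key dynamical fact is that the complement $K:=X\setminus(V_p\cup V_q)$ is a compact set on which every orbit spends only finitely many consecutive steps: by the convergence hypothesis $f^n(x)\to p$ and $f^{-n}(x)\to q$ for all $x\notin\{p,q\}$, together with compactness, there is a uniform $N$ such that for every $x\in K$ one has $f^N(x)\in V_p$ (a standard compactness/uniformity argument, using that the ``escape time'' into $V_p$ is finite at each point and upper semicontinuous). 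Dually, every backward orbit enters $V_q$ within $N$ steps.

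\medskip
With this uniform transit bound in hand, I would analyze a $\delta$-pseudo-orbit $(x_n)$ by splitting the index set into three regimes. For the choice of $\delta$: since only finitely many iterates of $f$ are involved in moving a point through the compact ``transit region'' $K$, uniform continuity of $f,f^2,\dots,f^N$ supplies a $\delta$ small enough that $N$ consecutive $\delta$-jumps stay within $\eps$ of a genuine orbit segment, and small enough that a pseudo-orbit sitting inside $V_p$ (respectively $V_q$) cannot escape the $\eps$-neighborhood of $p$ (respectively $q$). The shadowing point $y$ is then built by: following the pseudo-orbit while it is near $q$ (shadowed by $q$ itself, since $V_q$ is forward-trapping under $f^{-1}$ and of small diameter), using uniform continuity to shadow the finite transit through $K$ by a true orbit segment, and then following it while it is near $p$ (shadowed by $p$). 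The main obstacle will be making the transit-region argument uniform: proving that there is a single $N$ working for all $x\in K$, and then stitching the three regimes together so that the single point $y$ $\eps$-shadows the whole bi-infinite pseudo-orbit without accumulating error at the two ``handoff'' times where the orbit leaves $V_q$ and later enters $V_p$. I would handle this by noting the pseudo-orbit can cross $K$ only once (it is monotonically driven from the $q$-region to the $p$-region), so there is exactly one transit episode of bounded length $N$, and the finite-shadowing estimate there need only be applied once; invoking that finite and bi-infinite shadowing are equivalent on compact spaces (as recalled after Definition~\ref{shadowing}) then completes the argument.
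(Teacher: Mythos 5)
Your setup matches the paper's own proof almost exactly: trapping neighborhoods $V_p$ (with $\overline{f(V_p)}\subset V_p$) and $V_q$ (with $\overline{f^{-1}(V_q)}\subset V_q$), a uniform transit time $N$ for the compact region $K=X\setminus(V_p\cup V_q)$ obtained by compactness, a $\delta$ coming from uniform continuity of $f,\dots,f^N$, and the observation that a pseudo-orbit decomposes into a $q$-regime, a single bounded transit, and a $p$-regime. All of that is sound. The gap is at the decisive step: you never actually name the single point $y$ whose orbit shadows the whole pseudo-orbit, and the patch you invoke --- the equivalence of finite and bi-infinite shadowing --- cannot fill this hole. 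That equivalence reduces shadowing of infinite pseudo-orbits to shadowing of finite ones, but it does nothing about the handoff problem: even for a finite pseudo-orbit passing through all three regimes you still need \emph{one} orbit that simultaneously tracks the $q$-regime, the transit, and the $p$-regime; ``shadowed by $q$, then by a true orbit segment, then by $p$'' describes three different orbits, which is precisely what the definition of shadowing forbids.

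The missing idea (which is how the paper finishes) is to take $y=f^{-n}(x_n)$, where $n$ is the first index at which the pseudo-orbit exits the $q$-region. The forward handoff is then automatic: after the $N$ transit steps the true orbit of $x_n$ lies in $f(V_p)\subset V_p$ forever, and the pseudo-orbit is trapped in $V_p$ as well, so both are $\eps$-close to $p$. The backward handoff is the real point: one must show $f^{-k}(x_n)\in V_q$ for all $k\ge 1$, so that for $j<n$ both $x_j$ and $f^{j-n}(x_n)$ lie in the small set $V_q$. This requires one more condition on $\delta$ that your list omits: $d(x,y)<\delta$ must imply $d(f^{-1}(x),f^{-1}(y))<\eta$, so that from $d(f(x_{n-1}),x_n)<\delta$ and $x_{n-1}$ in the $q$-region one gets $f^{-1}(x_n)\in V_q$, after which the trapping of $V_q$ under $f^{-1}$ keeps the entire backward orbit of $x_n$ there. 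Relatedly, your claim that a (forward) pseudo-orbit sitting inside $V_q$ ``cannot escape the $\eps$-neighborhood of $q$'' is false as stated --- $q$ is a repeller, so forward pseudo-orbits do escape $V_q$; what you need is the time-reversed statement (pseudo-orbits of $f^{-1}$ starting in $V_q$ stay in $V_q$), which is exactly what yields both the single-crossing structure you assert and the backward handoff above. With these two corrections your argument becomes the paper's proof.
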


\begin{proof} Let $(X,f)$ be a simple homeomorphism with quasi-attractor fixed point $p$, quasi-repeller fixed point $q$, and $\varepsilon >0$. Let $U_p, U_q$ be open sets  with diameters less than $\varepsilon/2$ containing $p, q$, respectively, such that $\overline{f(U_p)} \subseteq U_p$, $\overline{f^{-1}(U_q)} \subseteq U_q$, and $U_p \cap U_q = \emptyset$.  As $p$ is a quasi-attractor fixed point of $f$, for each $x \in X \setminus (U_p \cup f^{-1}(U_q) )$, there is a positive integer $N_x$ such that $f^{N_x} (x) \in f(U_p)$. Note that for $n > N_x$, $f^n(x) \in f(U_p)$. Using the compactness of $X \setminus (U_p \cup f^{-1}(U_q) ) $ and the continuity of $f$, we may assume that there is a positive integer $N$ such that $N_x < N$ for all $x \in X \setminus (U_p \cup f^{-1}(U_q) )$. 

Let 
$\eta >0$ be such that 
$$\eta < 1/2  \cdot \min \{ \varepsilon, d_H(\overline{f(U_p)}, X \setminus U_p), d_H(\overline{f^{-1} (U_q)}, X \setminus U_q)\}.$$ Note that if $x \in f(U_p)$ and $y$ is $\eta$-close to $x$, then $f(y) \in f(U_p)$. Analogously, if $x \in f^{-1}(U_q)$ and $y$ is $\eta$-close to $x$, then $f^{-1}(y) \in f^{-1}(U_q)$. Using these observations, we have that if $(x_n)_{n=0}^{\infty}$ is any $\eta$-pseudo-orbit of $f$ with $x_0 \in U_p$, then $x_n \in U_p$ for all $n \ge 0$. Analogously, if $(x_n)_{n=0}^{\infty}$ is any $\eta$-pseudo-orbit  of $f^{-1}$ with $x_0 \in U_q$, we have that $x_n \in U_q$ for all $n \ge 0$.

Let $0 < \delta < \eta$ be such that if $x_0 \in X \setminus (U_p \cup f^{-1}(U_q))$ and $(x_j)_{j=0}^N$ is any $\delta$-pseudo-orbit of $f$, then $d(x_j, f^j(x_0))< \eta$ for all $ 0 \le j \le N$. This may be done using the uniform continuity of $f, \ldots, f^N$. Moreover, we also require that if $d(x,y)  < \delta$, then $d(f^{-1}(x), f^{-1}(y)) < \eta$.

Now, let $(x_j)_{j=0}^{\infty}$ be a $\delta$-pseudo-orbit of $f$. We need to show that it is $\varepsilon$-shadowed  by a real orbit. If $(x_j)_{j=0}^{\infty} \subseteq U_p$, then the fixed point $p$ $\varepsilon$-shadows $(x_j)_{j=0}^{\infty}$. Similarly, if $(x_j)_{j=0}^{\infty} \subseteq U_q$, then the fixed point $q$ $\varepsilon$-shadows $(x_j)_{j=0}^{\infty}$. Otherwise, we may choose the least $n \ge 0$ such that $x_n \notin  f^{-1}(U_q)$.  As $\delta$ is sufficiently small, it follows that $x_n \notin U_p$. As $d(f(x_{n-1}),x_n) < \delta$, by our choice of $\delta$ we have that $d(x_{n-1}, f^{-1}(x_n)) < \eta$. Since $x_{n-1} \in f^{-1}(U_q)$, by our choice of $\eta$ we have that $f^{-1}(x_n) \in U_q$. Hence, for all $k \ge 1$, we have that $f^{-k}(x_n) \in U_q$. Now it is easy to verify that $f^{-n}(x_n)$ $\varepsilon$-shadows $(x_j)_{j=0}^{\infty}$. Indeed, for $0 \le j <n$, we have that $x_j$ and $f^j(f^{-n}(x_n))$ belong to $U_q$. For $ n \le j \le n+N$ we have that $d(x_j,f^j(f^{-n}(x_n))) < \varepsilon$ by our choice of $\delta$. Finally, for $j > n+N$, we have that $x_j$ and $f^j(f^{-n}(x_n))$ both belong to $U_p$.
\end{proof}

\begin{proposition}\label{prop:comb}
Let $X$ be a continuum, $D = \{d_1, d_2,\ldots \} \subseteq X$ be a countable set, and $1 \le n \le \omega$. Let $h_1\colon X \rightarrow X$ be such that  $(X,h_1)$ is a simple dynamical system with quasi-attractor and quasi-repeller fixed points $p,q$, respectively, with $p,q \in X \setminus D$, and $h_1(D)=D$. Then, there is a homeomorphism $h\colon Z \rightarrow Z$,  $Z=(X,D,n)$-comb, such that
\begin{itemize}
    \item $\Pi \circ h = h_1 \circ \Pi$ where $\Pi:Z \rightarrow X$ is the projection map, and 
    \item $(Z,h)$ is a simple dynamical system with quasi-attractor and quasi-repeller points $(p,0)$ and $(q,0)$, respectively.
\end{itemize}
\end{proposition}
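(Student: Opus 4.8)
The plan is to build $h$ fiberwise over the projection $\Pi$, letting $h_1$ drive the base and transporting each attached star rigidly onto the star over its image, rescaled to the correct size. First I would record that, since $h_1$ is a homeomorphism with $h_1(D)=D$, the restriction $h_1|_D$ is a bijection of $D$; writing $h_1(d_i)=d_{\sigma(i)}$ defines a permutation $\sigma$ of $\N$. I then define $h$ by $h(x,0)=(h_1(x),0)$ for $x\in X\setminus D$ and $h\bigl(d_i,\tfrac1i y\bigr)=\bigl(d_{\sigma(i)},\tfrac{1}{\sigma(i)}y\bigr)$ for $y\in\stn$. The two formulas agree at the star centers $(d_i,0)$, the map $h$ is a bijection (its inverse is given by the same recipe using $h_1^{-1}$ and $\sigma^{-1}$), and $\Pi\circ h=h_1\circ\Pi$ holds by construction, yielding the first bullet for free.

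Next I would check that $h$ is a homeomorphism. As $Z$ is compact and $h$ is a bijection, it suffices to prove $h$ is continuous. The only delicate points are those where infinitely many stars accumulate, and the key observation is the following: if $(d_{i_k})_k$ are pairwise distinct points of $D$ with $d_{i_k}\to x$, then $i_k\to\infty$ (otherwise some index repeats infinitely often and forces $x$ to equal that point of $D$), and since $\sigma$ is a permutation this gives $\sigma(i_k)\to\infty$ as well. Consequently, whenever a sequence of star points $(d_{i_k},\tfrac{1}{i_k}y_k)$ converges to a base or center point $(x,0)$, so that $\tfrac{1}{i_k}|y_k|\to0$ and $i_k\to\infty$, its image $(d_{\sigma(i_k)},\tfrac{1}{\sigma(i_k)}y_k)$ again has vanishing $\R^2$-coordinate and $X$-coordinate tending to $h_1(x)$, giving $h(z_k)\to h(x,0)$. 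At a non-central star point only points of the same star can accumulate, where continuity is immediate. This settles continuity, hence that $h$ is a homeomorphism.

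Then I would verify that $(Z,h)$ is simple with the stated fixed points. Since $p,q\in X\setminus D$ are fixed by $h_1$, the points $(p,0)$ and $(q,0)$ are fixed by $h$, and they are the only points of $Z$ lying over $p$ and $q$. For any $z$ with $\Pi(z)=x\notin\{p,q\}$ I use $\Pi(h^n(z))=h_1^n(x)\to p$: if $z$ is a base or center point its $\R^2$-coordinate is identically $0$, while if $z=(d_i,\tfrac1i y)$ is a star point then $d_{\sigma^n(i)}=h_1^n(d_i)\to p\notin D$ forces $\sigma^n(i)\to\infty$, so the $\R^2$-coordinate $\tfrac{1}{\sigma^n(i)}y\to0$; either way $h^n(z)\to(p,0)$, and symmetrically $h^{-n}(z)\to(q,0)$. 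To produce trapping neighborhoods, given a neighborhood $U$ of $(p,0)$ I fix $\rho>0$ with $B((p,0),\rho)\subseteq U$ and choose $N>\diam(\stn)/\rho$; the finitely many points $d_1,\dots,d_N$ are all distinct from $p$, so there is $\rho_1\in(0,\rho]$ with $B_X(p,\rho_1)$ disjoint from them. The quasi-attractor property of $p$ for $h_1$ then supplies an open $V'$ with $p\in V'\subseteq B_X(p,\rho_1)$ and $\overline{h_1(V')}\subseteq V'$, and I set $V=\Pi^{-1}(V')$: it is open, contains $(p,0)$, and lies in $U$ because each of its points has $X$-coordinate in $V'\subseteq B_X(p,\rho)$ and, if it sits on a star, that star has index $>N$, hence $\R^2$-coordinate of norm $<\diam(\stn)/N<\rho$. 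Moreover $h(V)=\Pi^{-1}(h_1(V'))$, whose closure is contained in $\Pi^{-1}(\overline{h_1(V')})\subseteq\Pi^{-1}(V')=V$, so $(p,0)$ is a quasi-attractor; applying the same argument to $h^{-1}$ (which has identical structure with $h_1^{-1}$ and the quasi-repeller $q$) shows $(q,0)$ is a quasi-repeller, and thus $(Z,h)$ is simple.

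The hard part will be the continuity along sequences of stars collapsing onto the base, together with fitting the lifted trapping region inside a prescribed neighborhood of $(p,0)$: because $h$ can momentarily enlarge a star when $\sigma(i)<i$, one cannot simply cap the $\R^2$-coordinate. Both difficulties are resolved by the single observation that a permutation carries index sequences tending to infinity to index sequences tending to infinity, so that only finitely many large stars sit near $p$ and every star deep inside a small base neighborhood is automatically tiny.
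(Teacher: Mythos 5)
Your construction is correct and is essentially the paper's own proof: the paper defines $h$ in exactly this fiberwise way, carrying the tooth at $d_i$ onto the tooth at $h_1(d_i)$ after rescaling (the paper allows an arbitrary homeomorphism $h_2$ of $\stn$ fixing the origin where you take the identity, an inessential difference), and deduces simplicity of $(Z,h)$ from the fact that the teeth along any orbit of a point of $D$ shrink to zero because such orbits are infinite and accumulate only at $p,q\notin D$. Your extra verifications --- continuity via the observation that a permutation of $\N$ sends index sequences tending to infinity to index sequences tending to infinity, and the explicit trapping neighborhoods $\Pi^{-1}(V')$ --- correctly fill in details the paper leaves implicit.
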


\begin{proof} 
Let $h_2\colon \stn \rightarrow \stn$ be a homeomorphism such that $h_2(0) =0$ and define $h\colon Z \rightarrow Z$ on $Z=(X,D,n)$-comb by 
\[ 
   h (x,y) = \begin{cases}
       (h_1(x), 0) & \textit{ if } x \notin D\\
       (h_1(x), \frac{1}{j} \cdot h_2( i \cdot y)) & \textit{ if } x = d_i, \  h_1(x) = d_j.
   \end{cases}
\]
It is easy to check that $\Pi \circ h = h_1 \circ \Pi$. Moreover, by definition, we have that $\Pi$ is a monotone map.

As $p,q$ are quasi-attractor and quasi-repeller points of $h_1$ and for each $x \in D$ we see that $(h_1^{n}(x))_{n \in \Z}$ is infinite, it follows that the diameter of ``teeth'' attached at $h_1^n(x)$  goes to zero when $n \rightarrow \pm \infty$. Hence, we have that $(p,0)$ and $(q,0)$ are quasi-attractor and quasi-repeller points of $h$, respectively. More specifically,  we have that $(h^n(x,y))_{n\in\Z}$ converges to $(p,0)$ when $n\to\infty$ and to $(q,0)$ when $n\to-\infty$ for every $(x,y)\in Z\setminus\{(p,0),(q,0)\}$, and consequently $(Z,h)$ is a simple system. 

\end{proof}
\begin{remark}
 Using Propositions~\ref{prop:starshadowing}, \ref{prop:bridge}, and \ref{prop:comb}, one  can construct numerous dendrites and their homeomorphisms with the shadowing property. This can be done, for example, by bridging stars and forming combs, as in the proof of Theorem \ref{UniversalDendrite} below. 
\end{remark}
   
Our next goal is to show that the universal dendrite of order $n$ admits homeomorphisms with the shadowing property. 
\begin{definition}[Universal dendrite]
The \emph{universal dendrite of order $n$}, $n \ge 3$, denoted by $U_n$, is a dendrite such that the set of branch points of $U_n$ is dense in $U_n$ and each branch point has order $n$.
The arc can be considered as $U_2$ since every point in it except the endpoints have order $2$, and $U_1$ is simply the degenerate continuum consisting of a single point.
\end{definition}
There is only one universal dendrite of order $n$ up to homeomorphism \cite{Charatonik1994}. Moreover, any dendrite whose branch points have order $n$ or less can be embedded into $U_n$, hence the name universal \cite{NADLER92}.

We next recall some results concerning the inverse limit of dynamical systems (see \cite{GoodMitchellThomasPreservShadowing2020, BernardesShadwoingTopApp2023}). 

\begin{definition}[Inverse limit systems]
Let $((X_i,h_i))_{i\in\N}$ be a sequence (possibly finite) of topological dynamical systems and for each $i\in\N$ let $\varphi_i:X_{i+1} \rightarrow X_i$ be a surjective continuous map such that $h_i\circ \varphi_{i} =\varphi_{i} \circ h_{i+1}$. We define the inverse limit of $((X_i,h_i))_{i\in\N}$ as the set
\[\varprojlim \{X_i, \varphi_i\} :=  \left \{ (x_1,x_2,\ldots) \in \prod_{i\in\N} X_i;\,\, \varphi_i(x_{i+1}) = x_i  \right \} \] and the induced dynamical system on $\varprojlim \{X_i, \varphi_i\}$
\[h:\varprojlim \{X_i, \varphi_i\} \rightarrow \varprojlim \{X_i, \varphi_i\} \] by 
     \[h(x_1,x_2,\ldots) = (h_1(x_1), h_2(x_2), \ldots).\]
\end{definition}
    
We have the following well-known result (see \cite{GoodMitchellThomasPreservShadowing2020, BernardesShadwoingTopApp2023} for information and generalizations).
\begin{proposition}\label{prop:invlimitshadowing} Suppose we are in the setting above. If each of $h_i$ has the shadowing property, then so does $h$.
\end{proposition}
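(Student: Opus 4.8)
The plan is to prove Proposition~\ref{prop:invlimitshadowing} by a direct diagonal construction: given an $\varepsilon$-target for the inverse limit system $h$, reduce shadowing in $\varprojlim\{X_i,\varphi_i\}$ to shadowing in finitely many of the coordinate systems $h_i$, use the shadowing property in each coordinate to produce shadowing points $y^{(i)}$, and then correct these coordinatewise candidates into a genuine point of the inverse limit using the surjective bonding maps $\varphi_i$. The metric on $\varprojlim\{X_i,\varphi_i\}\subseteq\prod_i X_i$ is the usual product metric, so $\varepsilon$-closeness of two points is controlled by agreement in the first $N$ coordinates for $N=N(\varepsilon)$ large; this is the reduction that lets us work with only finitely many systems at a time.

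First I would fix $\varepsilon>0$ and choose $N$ so that controlling coordinates $1,\dots,N$ to within some $\varepsilon'$ forces $\varepsilon$-closeness in the product metric. For each $i\le N$ let $\delta_i>0$ witness the shadowing property of $h_i$ for the tolerance $\varepsilon'$, and set $\delta=\min_i\delta_i$ together with a uniform-continuity adjustment: since each $\varphi_i$ is continuous on a compact space, shrink $\delta$ so that $\delta$-closeness in $X_{i+1}$ projects to $\delta_i$-closeness in $X_i$. Given a $\delta$-pseudo-orbit $(z^{(n)})_{n}$ in the inverse limit, its projection to each coordinate $i$ is a $\delta$-pseudo-orbit of $h_i$ (here the intertwining relation $h_i\circ\varphi_i=\varphi_i\circ h_{i+1}$ guarantees the projected sequences are genuine pseudo-orbits and are compatible across levels). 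Apply shadowing in coordinate $N$ to obtain a point $y_N\in X_N$ that $\varepsilon'$-shadows the $N$th projection, and then \emph{define} the lower coordinates by pushing down: $y_i:=\varphi_i\circ\varphi_{i+1}\circ\cdots\circ\varphi_{N-1}(y_N)$.

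The key point, and the step I expect to carry the real content, is verifying that these pushed-down coordinates $y_i=\varphi_i(y_{i+1})$ still $\varepsilon'$-shadow the $i$th projected pseudo-orbit. This should follow because the shadowing orbit in coordinate $N$ maps, under the commuting bonding maps, to an orbit in coordinate $i$ that shadows the same projected pseudo-orbit; one invokes $h_i^n\circ\varphi_i\cdots=\varphi_i\cdots\circ h_N^n$ and the uniform continuity built into the choice of $\delta$ to transfer the $\varepsilon'$-estimate from level $N$ down to level $i$. Because the coordinates are defined by $y_i=\varphi_i(y_{i+1})$, the resulting tuple $y=(y_1,y_2,\dots)$ automatically lies in $\varprojlim\{X_i,\varphi_i\}$, and it $\varepsilon$-shadows the given pseudo-orbit since all coordinates up to $N$ are $\varepsilon'$-shadowed and the tail is controlled by the product metric. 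The main obstacle is organizing the quantifiers so that a single $\delta$ simultaneously serves all $N$ coordinate systems while the bonding maps propagate estimates downward without loss; this is purely a matter of compactness and uniform continuity, so no genuinely hard analysis is needed, only careful bookkeeping. (An alternative, if one prefers to avoid the propagation argument, is to shadow in \emph{every} coordinate independently and then reconcile the candidates via the bonding maps, but the push-down construction is cleaner because it produces an inverse-limit point for free.)
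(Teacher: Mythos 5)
The paper offers no proof of this proposition at all --- it is quoted as well known, with references \cite{GoodMitchellThomasPreservShadowing2020, BernardesShadwoingTopApp2023} --- so the only comparison available is with the standard argument, which is exactly the one you are attempting: reduce to finitely many coordinates via the product metric, shadow in coordinate $N$, push the shadowing point down through the bonding maps, and let the weights $2^{-i}$ absorb all coordinates beyond $N$. The architecture is right, but two steps fail as written.

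First, the transfer of the shadowing estimate from level $N$ down to level $i<N$ is mis-quantified. You shadow the $N$-th projection within $\varepsilon'$ and claim the push-downs $y_i=\varphi_i\circ\cdots\circ\varphi_{N-1}(y_N)$ still $\varepsilon'$-shadow, citing ``the uniform continuity built into the choice of $\delta$''. But what you built into $\delta$ (that $\delta$-close points of $X_{i+1}$ map to $\delta_i$-close points of $X_i$) controls pseudo-orbit jumps, not shadowing estimates: applying $\varphi_i\circ\cdots\circ\varphi_{N-1}$ to two points that are merely $\varepsilon'$-apart in $X_N$ gives no bound at level $i$ in terms of $\varepsilon'$. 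The repair must come in the right order: first use uniform continuity of the finitely many composites $\varphi_i\circ\cdots\circ\varphi_{N-1}$, $i<N$, to choose $\varepsilon''\le\varepsilon'$ such that $\varepsilon''$-closeness in $X_N$ pushes down to $\varepsilon'$-closeness in every $X_i$ with $i<N$, and only then invoke the shadowing property of $h_N$ with tolerance $\varepsilon''$. (Once this is done, the shadowing property of $h_i$ for $i<N$ is never used, so your $\delta_i$'s and $\delta=\min_i\delta_i$ are superfluous.) Second, the point you produce is not a point of the inverse limit: you construct only $y_1,\dots,y_N$, whereas an element of $\varprojlim\{X_i,\varphi_i\}$ requires all coordinates, compatible under every bonding map. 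This is precisely where surjectivity of the $\varphi_i$ enters: choose successively $y_{N+1}\in\varphi_N^{-1}(y_N)$, $y_{N+2}\in\varphi_{N+1}^{-1}(y_{N+1})$, and so on; these coordinates need no shadowing estimate because the tail of the product metric contributes less than, say, $\varepsilon/2$. A smaller slip of the same kind: the projection of a $\delta$-pseudo-orbit of $h$ to coordinate $i$ is not a $\delta$-pseudo-orbit of $h_i$ --- the product-metric weights magnify errors by a factor growing with $i$ --- so that reduction is legitimate only for the finitely many indices $i\le N$, with $\delta$ chosen small against $2^N$ and the shadowing tolerance of $h_N$. With these repairs your argument is correct and is the standard proof of the cited result.
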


The following is the last result of this article. $U_2$ can be considered an arc as all points besides the end 

\begin{theorem}\label{UniversalDendrite} The universal dendrite $U_n$ admits a homeomorphism with the shadowing property for every $1 \le n \le \infty$.
\end{theorem}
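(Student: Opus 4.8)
The plan is to realize $U_n$ as an inverse limit of combs in which branch points of order $n$ are introduced on an ever denser set, and to carry the shadowing property through the limit using Propositions \ref{prop:comb}, \ref{simpleimplyshadowing}, and \ref{prop:invlimitshadowing}. First I dispose of the degenerate cases: for $n=1$ the dendrite $U_1$ is a single point and the identity trivially has the shadowing property, while for $n=2$ the dendrite is the arc $[0,1]$ and $h(x)=x^2$ works by Proposition \ref{basicinterval} (or Corollary \ref{cor:typicalhomeoshadowing}). So assume $3\le n\le\infty$ and set $m=n-2$ when $n$ is finite and $m=\omega$ when $n=\infty$, so that attaching a copy of $\operatorname{st_m}$ at a point of order $2$ produces a point of order exactly $n$, since $2+m=n$.

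I build the inverse sequence recursively. Put $Z_1=[0,1]$ and $h_1(x)=x^2$; this is a simple system with quasi-attractor $p=0$ and quasi-repeller $q=1$, whose branch points (there are none) vacuously all have order $n$. Given a simple system $(Z_i,h_i)$ on a dendrite all of whose branch points have order $n$, I choose a countable set $S_i\subseteq Z_i\setminus\{p,q\}$ of order-$2$ points that is dense in $Z_i$, and set $D_i=\bigcup_{k\in\Z}h_i^k(S_i)$. Since $h_i$ is a homeomorphism it preserves order, so $D_i$ is a countable, $h_i$-invariant, dense set of order-$2$ points avoiding $\{p,q\}$; I enumerate $D_i$ so that points lying far along orbits (hence near $p$ or $q$) receive large indices. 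Let $Z_{i+1}$ be the $(Z_i,D_i,m)$-comb and let $h_{i+1}$ be the homeomorphism furnished by Proposition \ref{prop:comb}, built from a homeomorphism of $\operatorname{st_m}$ fixing the origin as in Corollary \ref{cor:nstarshadowing}. Proposition \ref{prop:comb} guarantees that $(Z_{i+1},h_{i+1})$ is again simple, that the projection $\Pi_i\colon Z_{i+1}\to Z_i$ is a monotone surjection, and that $\Pi_i\circ h_{i+1}=h_i\circ\Pi_i$; thus $((Z_i,h_i))_{i\in\N}$ with bonding maps $\Pi_i$ is an inverse system of the kind required by Proposition \ref{prop:invlimitshadowing}. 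By Proposition \ref{simpleimplyshadowing} every $h_i$ has the shadowing property. Note that at stage $i+1$ each point of $D_i$ acquires order $n$, the interiors of the new teeth consist of fresh order-$2$ points, and no previously existing branch point is enlarged, so all branch points of $Z_{i+1}$ again have order exactly $n$.

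Now let $Z=\varprojlim\{Z_i,\Pi_i\}$ with induced map $h$. By Proposition \ref{prop:invlimitshadowing}, $h$ has the shadowing property, so it remains to identify $Z$ with $U_n$. Since each $Z_i$ is a dendrite and each $\Pi_i$ is a monotone surjection, $Z$ is again a dendrite, being an inverse limit of dendrites with monotone bonding maps (cf. \cite{NADLER92}). Because order-$n$ branch points are created on a dense subset of $Z_i$ at every stage, and the interiors of all teeth are themselves branched at later stages, these branch points stabilize after finitely many stages and lift to a dense set of order-$n$ branch points of $Z$; moreover, since we only ever attach $\operatorname{st_m}$ at order-$2$ points and never enlarge an existing branch point, no point of $Z$ has order exceeding $n$. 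Hence $Z$ is a dendrite whose branch points are dense and all of order $n$, so by uniqueness of the universal dendrite of order $n$ \cite{Charatonik1994} we conclude $Z\cong U_n$. Transporting $h$ across this homeomorphism yields a homeomorphism of $U_n$ with the shadowing property.

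The hard part will be the identification $Z\cong U_n$. One must verify rigorously that the order of each point of the inverse limit equals its eventual order in the $Z_i$, so that branch points have order \emph{exactly} $n$ and nothing is over-branched in the passage to the limit, and that the branch points are genuinely dense in $Z$ rather than merely in each finite stage. This requires careful bookkeeping of the dense invariant sets $D_i$ together with a local analysis of the bonding maps $\Pi_i$ near a point of the limit, using that near a stabilized branch point the $\Pi_i$ are eventually local homeomorphisms. The shrinking of the teeth (the $i$-th attachment carries a $\tfrac{1}{i}$-scaled copy of $\operatorname{st_m}$, under a compatible enumeration) is what keeps $Z$ compact and ensures that the simplicity hypothesis of Proposition \ref{prop:comb} persists at every stage.
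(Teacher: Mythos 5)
Your overall architecture is exactly the paper's: build combs over $[0,1]$ inductively using Proposition~\ref{prop:comb}, get shadowing at each stage from Proposition~\ref{simpleimplyshadowing}, pass to the inverse limit with Proposition~\ref{prop:invlimitshadowing}, and then identify the limit with $U_n$. The shadowing half of your argument is complete and correct. The genuine gap is the identification of the inverse limit with $U_n$, which you explicitly defer as ``the hard part'' --- this is roughly half of the paper's proof --- and, more seriously, the strategy you sketch for closing it would fail. It is \emph{not} true that the order of a point of the inverse limit equals its eventual order in the $Z_i$: a thread whose coordinates lie inside the interior of a newly attached tooth at infinitely many stages (in the paper's setup, a thread with $x_k \in D_k$ for all $k$) has order $2$ at every finite stage, yet it is an \emph{endpoint} (order $1$) of the limit. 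The paper treats these points by a separate argument: the complement of such a point in the limit contains a dense connected set, and in a dendrite a dense component of the complement forces order $1$. Likewise, your hint that ``near a stabilized branch point the $\Pi_i$ are eventually local homeomorphisms'' is false: since $D_i$ is dense in $Z_i$, every neighborhood of every point of $Z_{i+1}$ meets newly attached teeth, so no bonding map is injective on any open set. The tool that actually works is monotonicity of the projections $\Pi_k$ (preimages of connected sets are connected), combined with the observation that for threads whose coordinates eventually avoid the $D_l$ the fibers $\varphi_l^{-1}(x_l)$ are singletons, so the components of $Z_k\setminus\{x_k\}$ pull back exactly to the components of the limit minus the point.

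There is also a structural slip in your induction that makes even the correct order analysis unavailable as stated: you take $D_{i+1}$ to be the $h_{i+1}$-orbit of an \emph{arbitrary} countable dense set of order-$2$ points, whereas the paper insists that $D_{i+1}$ be chosen from the interiors of the newly attached teeth, i.e., points whose first coordinate lies in $D_i$. That restriction is what yields the key dichotomy (if $x_k \notin D_k$ then $x_l \notin D_l$ for all $l \ge k$, and all subsequent fibers are singletons) on which the whole computation of orders in the limit rests. With your choice, a base point $(x_k,0)$ with $x_k \notin D_k$ may belong to $D_{k+1}$, so threads can enter and leave the sets $D_l$ infinitely often and the two-case analysis breaks into further cases you have not addressed. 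These cases can in fact be handled (they again produce only orders $1$, $2$, or $n$), but as written your proof neither notices the issue nor supplies the argument, so the conclusion $Z \cong U_n$ is not established.
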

\begin{proof}
The case $n=1$ and $n=2$ are trivial as $U_1$ is a single point and $U_2$ is an arc which admits many homeomorphisms with the shadowing property. Hence we consider $n \ge 3$.
Let $Y$ be the $(n-2)$-star in the plane emanating from the origin $0$. By Corollary~\ref{cor:nstarshadowing}, there is a homeomorphism $f:Y \rightarrow Y$ with the shadowing property such that $f(0)=0$. We will construct $U_n$ with an induction process which we begin by letting $X_0 = [0,1]$, $h_0:X_0 \rightarrow X_0$ be a simple homeomorphism with quasi-attractor and quasi-repeller fixed points $0,1$, and $D_0$ be a countable dense subset of $(0,1)$ such that $h_0(D_0) =D_0$.

Suppose we are at step $k \ge 1$, a dendrite $X_k$, a simple homeomorphism $h_k:X_k \rightarrow X_k$ with quasi-attractor point $p_k$ and quasi-repeller point $q_k$, and a countable dense set $D_k \subseteq X_k \setminus \{p_k, q_k\}$ with $h_k(D_k) = D_k$ are defined so that \begin{itemize}
        \item the set of branch points of $X_k$ does not intersect $D_k$, 
        \item each $x \in D_k$ has order 2 in $X_k$,
        \item each branch point of $X_k$ has  order $n$.
        
    \end{itemize} Applying Proposition~\ref{prop:comb} to $X_k$, $h_k$, $D_k$, $Y$, and $f$, we obtain $X_{k+1}$, $h_{k+1}$, and a surjective map $\varphi_k : X_{k+1} \rightarrow X_k$ such that
    \begin{itemize}
        \item $X_{k+1}=(X_k,D_k,n-2)$-comb,
        \item $h_{k+1}:X_{k+1} \rightarrow X_{k+1}$ is a simple homeomorphism with quasi-attractor fixed point $(p_k,0)$ and quasi-repeller fixed point $(q_k,0)$, 
        \item $\varphi_k : X_{k+1} \rightarrow X_k$ is a monotone map such that $\varphi_{k} \circ h_{k+1} = h_{k} \circ \varphi_k$, and 
        \item each point of $D_k\times \{0\}$ is a branch point of $X_{k+1}$ of order $n$.
    \end{itemize}
We now let $D_{k+1} \subseteq X_{k+1}$ be any countable set dense in $X_{k+1}$  such that for all $(x,y) \in D_{k+1}  $, $x \in D_k$, $(x,y)$ is a point of order 2 of $X_{k+1}$ and $h_{k+1}(D_{k+1}) = D_{k+1}$. This can easily be accomplished by choosing a countable set dense in each ``tooth'' of the comb and closing the union of these sets under the orbit of $h_{k+1}$. 

Let us now see why the induction hypotheses hold at step $k+1$. Indeed, by our choice of $D_{k+1}$, we have that no point of $D_{k+1}$ is a branch point of $X_{k+1}$ and each point of $D_{k+1}$ has order $2$ in $X_{k+1}$. To see that each branch point of $X_{k+1}$ has order $n$, 
note that the branch points of $X_{k+1}$ are of the form $(p,0)$, with $p \in X_k$. If $p \notin D_k$, then $p$ must be a branch point of $X_k$ and $p$ has order $n$ in $X_k$. As nothing is attached at $(p,0)$ when forming $X_{k+1}$, we have that $(p,0)$ has order $n$ in $X_{k+1}$. If $p \in D_k$, then by the induction hypotheses at step $k$, $p$ has order $2$ in $X_k$.  As $X_{k+1}$ is formed by attaching  $\mbox{st}_{n-2}$ at $p$, we have that $(p,0)$ has order $n$ in $X_{k+1}$. This finishes the induction step.

Now we consider the inverse limit space $X:=\varprojlim \{X_i, \varphi_i\}$, which is a continuum. Indeed, as $\varphi_i$'s are monotone, we have that $X$ is a dendrite (see \cite[Theorem 10.36]{NADLER92}). 
Now consider the induced map $h\colon X \rightarrow X$ given by $h(x_1,x_2,\ldots) = (h_1(x_1), h_2(x_2), \ldots)$. By Proposition~\ref{simpleimplyshadowing}, we have that each $h_k$ has the shadowing property. By Proposition~\ref{prop:invlimitshadowing}, we have that $h$ is a homeomorphism of $X$ with the shadowing property. It only remains to show that $X$ is the universal dendrite of order $n$, that is, each branch point of $X$ has order $n$ and the set of branch points of $X$ is dense in $X$. 

We let $\Pi_k$ be the projection of $X$ onto the $X_k$ coordinate. As each $\varphi_i$ is  monotone, $\Pi_k$ is monotone \cite[Ex 8.47]{NADLER92}.
    We will first show that each branch point of $X$ has order $n$, or, equivalently, the order of each point of $X$ is either 1, 2, or $n$.  To this end, consider $(x_0, x_1, \ldots) \in X$. We observe that by construction we have that if $x_k \notin D_k$ for some $k\in\N$, then $x_{k+1} = (x_k,0)$ and $x_{k+1} \notin D_{k+1}$. Hence, for such $k$, we have that $x_l \notin D_l$ for every $l \ge k$.  Thus, we have two cases to consider, either there exists $k\in\N$ such that $x_l \notin D_l$ for every $l \ge k$, or $x_k \in D_k$ for every $k \in \N$.
    In the first case, by hypothesis, we have that $x_k$ has order $j$ in $X_k$ where $j \in \{1, 2, n \}$. As such, we have $X_k \setminus \{x_k\}$ has $j$ components, labeled $A_1, \ldots, A_j$. As $\Pi_k$ is  monotone, we have that each $\Pi_k^{-1}(A_i)$ is connected in $X$.  As $x_l \notin D_l$ for all $l \ge k$, we have that $\varphi_l^{-1}(x_l)$ has exactly one element for each $l \ge k$. Hence, we have that 
    $$\Pi_k^{-1}(x_k) = (x_0, x_1, \ldots, x_k, \ldots),$$ i.e, $X \setminus \{ (x_0, x_1, 
    \ldots) \}$ is the union of $j$ connected sets, namely, $\Pi_k^{-1}(A_i)$, $ 1 \le i \le j$. This ensures that $(x_0, x_1, \ldots, x_k, \ldots)$ has order $j$ in $X$. Let us now consider the case  $x_k \in D_k$ for all $k \in \N$. In this case, it suffices to show that $X \setminus \{(x_0,x_1,\ldots)\}$ has a component dense in $X$, implying that the order of $(x_0,x_1, \ldots)$ is 1 in $X$ as $X$ is a dendrite. The density follows from the fact that for each $k\in\N$, there is a component of $X_{k+1} \setminus \{x_{k+1}\}$ which contains $X_k \times \{0\}$ as a subset. This is the case as all elements of $D_{k+1}$ are chosen from the ``teeth'' of $X_{k+1}$.

    Finally, to see that the set of branch points of order $n$ is dense in $X$, we observe that for all $k\in\N$ and all $x \in D_k$, $(x,0)$ is a branch point of order $n$ of $X_{k+1}$ and hence if we consider a point $(x_0, x_1, \ldots) \in X$ such that $x_k \in D_k$ and $x_{l+1} = (x_l,0)$ for all $l \ge k$, then $(x_0, x_1, \ldots) \in X$ is a branch point of order $n$ in $X$. Hence, for all $k \in \N$ and all open subsets $U \subseteq X_k$, we see that $[U] := \{(x_1, x_2,\ldots) \in X; x_k \in U\}$ contains a branch point of $X$ of order $n$, implying that the set of branch points of order $n$ of $X$ is dense in $X$ and completes the proof. 
\end{proof}

\section{Open questions}

The examples of systems with shadowing for $C(f)$ in this article involve dendrites, one-dimensional continua.  The other known result concerning the shadowing property for $C(f)$ is negative \cite{ArbietoBohorquez}, leading to the following question.

\begin{question}
Does there exist a topological dynamical system $(X,f)$, $X$ a continuum with dimension greater than one, such that $C(f)$ has the shadowing property?
\end{question}

We still cannot decide whether $C(f)$ has the shadowing property in the case of general Morse-Smale diffeomorphisms. The following question was stated in \cite{ArbietoBohorquez} and is still open:

\begin{question}
Does there exist a Morse-Smale diffeomorphism $f\colon M\to M$ of a closed manifold such that $C(f)$ has the shadowing property?
\end{question}





\vspace{+0.8cm}

\hspace{-0.45cm}\textbf{Acknowledgments.}
Bernardo Carvalho was supported by Progetto di Eccellenza MatMod@TOV (CUP E83C23000330006) and Prin 2022 (PRIN 2017S35EHN). The authors thank Prof. Carlangelo Liverani for supporting two visits of U. Darji to the Mathematics Department of the University of Rome Tor Vergata where this research was developed. The hospitality of the department is especially appreciated. The authors thank A. Kocsis and T. Katay  for observing an error in the proof of Proposition \ref{prop:comb} of a previous version of this article. The authors also thank M. Elekes and M. Palfy for ideas that led to a corrected version in the present article.\\

The authors thank anonymous referees whose valuable suggestions improved the exposition of the article.

\bibliographystyle{acm}
\bibliography{biblio}

\vspace{1.0cm}
\noindent

{\em B. Carvalho}
\vspace{0.2cm}

\noindent

Dipartimento di Matematica,

Università degli Studi di Roma Tor Vergata

Via Cracovia n.50 - 00133

Roma - RM, Italy
\vspace{0.2cm}

\email{mldbnr01@uniroma2.it}

\vspace{1cm}
{\em U. B. Darji}
\vspace{0.2cm}

\noindent

Department of Mathematics,

University of Louisville

Louisville, KY 40292
USA
\vspace{0.2cm}

\email{ubdarj01@louisville.edu}
\end{document}